\theoremstyle{plain}
  \newtheorem{thm}{Theorem}
  \newtheorem{lem}{Lemma}
\theoremstyle{definition}
  \newtheorem{defn}{Definition}
\theoremstyle{remark}
  \newtheorem{rem}{Remark}
\theoremstyle{proposition}
  \newtheorem{pr}{Proposition}
\theoremstyle{proof}
\theoremstyle{property}
\theoremstyle{corollary}
  \newtheorem{cor}{Corollary}
\newcommand{\xqedhere}[2]{%
\rlap{\hbox to#1{\hfil\llap{\ensuremath{#2}}}}}
\begin{document}

\title[L\'{e}vy constant and Chernoff-type estimate]{Random Continued fractions: L\'{e}vy constant and Chernoff-type estimate}
\author{Lulu Fang$^1$ Min Wu$^1$ Narn-Rueih Shieh$^2$ and Bing Li$^{1, *}$}
\address{1 Department of Mathematics, South China University of Technology, Guangzhou, 510641, P.R. China}
\email{fanglulu1230@163.com, wumin@scut.edu.cn, libing0826@gmail.com}
\address{2 Department of Mathematics, Honorary Faculty, National Taiwan University, Taipei, 10617, Taiwan}
\email{shiehnr@ntu.edu.tw}
\keywords{Random continued fractions, L\'{e}vy constant, Chernoff-type estimate}
\thanks{* Corresponding author}
\thanks{2010 AMS Subject Classifications: 11K50, 37A50, 60G10}

\begin{abstract}
  Given a stochastic process $\{A_n, n \geq 1\}$ taking values in natural numbers, the random continued fractions is defined as $[A_1, A_2, \cdots, A_n, \cdots]$ analogue to the continued fraction expansion of real numbers. Assume that  $\{A_n, n \geq 1\}$ is ergodic and the expectation $E(\log A_1) < \infty$, we give a L\'{e}vy-type metric theorem which covers that of real case presented by L\'{e}vy in 1929. Moreover, a corresponding Chernoff-type estimate is obtained under the conditions $\{A_n, n \geq 1\}$ is $\psi$-mixing and for each $0< t< 1$, $E(A_1^t) < \infty$.
\end{abstract}
\maketitle

\section{Introduction}
Let $T: [0,1) \longrightarrow [0,1)$ be the Gauss transformation defined by
\[
Tx =
\begin{cases}
\frac{1}{x} - \left\lfloor\frac{1}{x}\right\rfloor &\text{if $x \in (0,1)$} \\
0  & \text{if $x=0$},
\end{cases}
\]
where $\lfloor x\rfloor$ denotes the greatest integer not exceeding $x$. Then every $x \in [0,1)$ can be written as the following recursive form
\begin{equation}\label{continued fraction expansion}
x = \dfrac{1}{a_1(x) +\dfrac{1}{a_2(x) + \ddots +\dfrac{1}{a_n(x)+ T^nx}}},
\end{equation}
where $a_1(x) = \lfloor\frac{1}{x}\rfloor$ and $a_n(x) = a_1(T^{n -1}x)$ for all $n \geq 2$ are called the partial quotients of $x$ . If there exists $n \in \mathbb{N}$ such that $T^{n}x =0$, we say that the form (\ref{continued fraction expansion}) is a finite continued fraction expansion of $x$ denoted by $[a_1(x),a_2(x),\cdots,a_n(x)]$. Otherwise, the form (\ref{continued fraction expansion}) is said to be an infinite continued fraction expansion of $x$ denoted by $[a_1(x),a_2(x),\cdots,a_n(x),\cdots]$. For more details about continued fractions, we refer the reader to \cite{les79},\cite{les80},\cite{les81},\cite{les76} and the references therein.

It is well-known that for every real number $x \in [0,1)$, there corresponds a continued fraction expansion. Moreover, the expansion is finite if $x$ is rational and the expansion is infinite if $x$ is irrational (see \cite{les76}, Theorem 14).

For any $x \in [0,1)$ and $n \geq 1$, we define the truncated continued fraction
\[
\frac{p_n(x)}{q_n(x)}:= [a_1(x), a_2(x), \cdots, a_n(x)],
\]
where $p_n(x)$ and $q_n(x)$ are relatively prime integers. We say $p_n(x)/q_n(x)$ the $n$-th convergent of the continued fraction expansion of $x$. Clearly these convergents are rational numbers and $p_n(x)/q_n(x) \rightarrow x$ as $n \rightarrow \infty$ for all $x \in [0,1)$. More precisely, for any $x \in [0,1)$ and $n \geq 1$,
\begin{equation*}
\frac{1}{2q_{n+1}^2(x)}\leq \left|x-\frac{p_n(x)}{q_n(x)}\right| \leq \frac{1}{q_n^2(x)}.
\end{equation*}
This is to say the speed of $p_n(x)/q_n(x)$ approximating to $x$ is dominated by $q_n^{-2}(x)$. So the denominator of the $n$-th convergent $q_n(x)$ plays an important role in the problem of Diophantine approximation. Concerning pointwise asymptotic behaviour of the sequence $\{q_n(x), n \geq 1\}$, L\'{e}vy \cite{les82} obtained the following theorem.

\begin{thm}[\cite{les82}]\label{real Levy}
For $\mathcal{L}$-almost every $x \in [0, 1)$,
\begin{equation}\label{levy}
\lim\limits_{n \to \infty}\frac{1}{n}\log q_n(x) =  \frac{\pi^2}{12\log2},
\end{equation}
where $\mathcal{L}$ denotes the Lebesgue measure.
\end{thm}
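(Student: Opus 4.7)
The plan is to prove this via Birkhoff's ergodic theorem applied to the Gauss map $T$, using as invariant measure the classical Gauss measure $d\mu = \frac{1}{\log 2}\,\frac{dx}{1+x}$, which is known to be $T$-invariant, ergodic, and equivalent to Lebesgue measure. Thus any $\mu$-a.e.\ statement is also an $\mathcal{L}$-a.e.\ statement, so it suffices to work with $\mu$.

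The central identity I would establish first is
\[
\prod_{k=0}^{n-1} T^k x \;=\; \frac{1}{q_n(x) + q_{n-1}(x)\,T^n x},
\]
which I would prove by induction on $n$, using the recursion $q_n = a_n q_{n-1} + q_{n-2}$ together with the fundamental relation $T^{k-1}x = 1/(a_k(x) + T^k x)$. Because $0 \leq T^n x < 1$ and $q_{n-1} \leq q_n$, this identity immediately yields the sandwich
\[
\frac{1}{2 q_n(x)} \;\leq\; \prod_{k=0}^{n-1} T^k x \;\leq\; \frac{1}{q_n(x)}.
\]
Taking logarithms and dividing by $n$ gives
\[
\frac{\log q_n(x)}{n} \;=\; -\,\frac{1}{n}\sum_{k=0}^{n-1} \log T^k x \;+\; O(1/n),
\]
which reduces the theorem to computing the a.e.\ limit of a Birkhoff average.

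Next I would apply Birkhoff's ergodic theorem to the function $f(x) = -\log x$ with respect to $\mu$. The function $f$ lies in $L^1(\mu)$ because $\int_0^1 |\log x|\,dx < \infty$ and the density $1/((1+x)\log 2)$ is bounded on $[0,1]$, so ergodicity of $(T,\mu)$ yields, for $\mu$-a.e.\ $x$,
\[
\frac{1}{n}\sum_{k=0}^{n-1}(-\log T^k x) \;\longrightarrow\; \int_0^1 \frac{-\log x}{(1+x)\log 2}\,dx.
\]
The remaining step is the explicit evaluation of this integral: expanding $1/(1+x) = \sum_{k\geq 0}(-1)^k x^k$, integrating term-by-term against $-\log x$ using $\int_0^1 -x^k \log x\,dx = 1/(k+1)^2$, and summing $\sum_{k\geq 0}(-1)^k/(k+1)^2 = \pi^2/12$ gives exactly $\pi^2/(12\log 2)$.

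The main obstacle in this route is not any single step but the bookkeeping around the fact that the relevant invariant measure is the Gauss measure rather than Lebesgue. In particular, I must cite (or sketch) the nontrivial facts that $\mu$ is $T$-invariant and ergodic, and verify the $L^1(\mu)$ hypothesis for $\log x$ to legitimately invoke Birkhoff. Everything else—the product identity, the sandwich bound, and the Euler-type series evaluation of the integral—is standard and routine. The payoff of organizing the proof this way is that it generalizes cleanly: in the stochastic setting of the paper, one expects to replace $T$ with the shift on the process $\{A_n\}$, replace the Gauss-measure integral with an expectation under the ergodic invariant law, and rerun the same sandwich argument, which is presumably why the authors impose ergodicity and $E(\log A_1) < \infty$ in their hypotheses.
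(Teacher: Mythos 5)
Your proposal is correct and is essentially the paper's own argument: the authors prove Theorem 1 as the special case of Theorem 3 obtained by taking $\{A_n\}=\{a_n\}$ under the Gauss measure, and the proof of Theorem 3 is exactly your structure --- the sandwich $\frac{1}{2Q_n}\leq X_1\cdots X_n\leq\frac{1}{Q_n}$ (their Propositions 1(v)--(vi) and 2, which in the real case is your product identity) followed by the ergodic theorem applied to $\log X_1$, with the limit identified as $-\int\log X\,dP=\pi^{2}/(12\log 2)$. The only cosmetic difference is that they run the ergodic theorem on the stationary ergodic process $\{X_n\}$ rather than on the Gauss map directly, which coincides with your Birkhoff average when specialized to the real case.
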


If $\lim\limits_{n \to \infty}\frac{1}{n}\log q_n(x)$ exists, such limit is called the L\'{e}vy constant of $x$, which has been studied by many mathematicians, see \cite{les94}, \cite{les88}, \cite{les95} and \cite{les96}. Some other limit theorems for $\{q_n(x), n \geq 1\}$ in continued fraction expansions of real numbers have been extensively investigated. For instance, the central limit theorem for $\{q_n(x), n \geq 1\}$ was given in \cite{les89} and the corresponding rate of convergence was studied by G. Misevi\v{c}ius \cite{les92}, the law of the iterated logarithm for $\{q_n(x), n \geq 1\}$ was provided by \cite{les91}, \cite{les90}.

Notice that $\frac{1}{n}\log q_n(x)$ converges to $\frac{\pi^2}{12\log2}$ in probability, a natural question is what the rate of convergence in (\ref{levy}) is. We show that such rate is at most exponential by the following theorem which is a Chernoff-type estimate for $\frac{1}{n}\log q_n(x)$.

\begin{thm}\label{Levy guji}
For any $\delta > 0$, there exist $N > 0$, $B > 0$, $\alpha > 0$ such that for all $n \geq N$,
\begin{equation*}
\mathcal{L}\left(\left|\dfrac{1}{n}\log q_n - \dfrac{\pi^2}{12\log2}\right| \geq \delta\right) \leq B\exp(-\alpha n).
\end{equation*}
\end{thm}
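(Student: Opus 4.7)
The plan is to obtain Theorem \ref{Levy guji} as a corollary of the general Chernoff-type estimate for random continued fractions announced in the abstract, by specializing the stochastic process $\{A_n\}$ to the partial-quotient process on $([0,1), \mu_G)$, where $\mu_G$ is the Gauss measure $d\mu_G = \frac{1}{\log 2}\frac{dx}{1+x}$. Under $\mu_G$ the sequence $\{a_n\}_{n \geq 1}$ is stationary (since $\mu_G$ is $T$-invariant), ergodic (since the Gauss map is), and $\psi$-mixing with exponential rate (classical, going back to Philipp and Iosifescu). From $\mu_G(\{a_1 = k\}) = \frac{1}{\log 2}\log\frac{(k+1)^2}{k(k+2)} \asymp k^{-2}$ one checks that $E_{\mu_G}(a_1^t) < \infty$ for every $0 < t < 1$. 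Thus the hypotheses of the general Chernoff-type estimate are satisfied and it yields: for every $\delta > 0$ there exist $N, B_0, \alpha > 0$ such that for all $n \geq N$,
\[
\mu_G\bigl(\bigl|\tfrac{1}{n}\log q_n - C\bigr| \geq \delta\bigr) \leq B_0 \exp(-\alpha n),
\]
where $C$ is the L\'evy constant attached to this process.

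The constant $C$ must equal $\pi^2/(12\log 2)$: the above Chernoff bound is summable in $n$, so Borel-Cantelli forces $\tfrac{1}{n}\log q_n \to C$ for $\mu_G$-a.e.\ $x$, and since $\mu_G$ and $\mathcal{L}$ are mutually absolutely continuous this convergence also holds $\mathcal{L}$-a.e., which together with Theorem \ref{real Levy} pins down $C = \pi^2/(12\log 2)$. The Radon-Nikodym derivative $\frac{d\mathcal{L}}{d\mu_G}(x) = (1+x)\log 2$ is bounded above by $2\log 2$ on $[0,1)$, so $\mathcal{L}(E) \leq 2\log 2 \cdot \mu_G(E)$ for every measurable set $E$. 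Applying this to $E_n = \{x : |\tfrac{1}{n}\log q_n(x) - \pi^2/(12\log 2)| \geq \delta\}$ produces the bound claimed in Theorem \ref{Levy guji} with $B = 2 B_0 \log 2$ and the same $N, \alpha$.

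The real work lies not in this reduction but in the general Chernoff-type estimate itself: the natural observable $\log a_1$ is unbounded and has only fractional moments, so standard exponential concentration results for bounded cocycles over $\psi$-mixing systems do not apply directly. The expected route is to exploit the identity $\log q_n(x) = -\sum_{k=0}^{n-1} \log T^k x + O(1)$ to reduce to a Birkhoff-type sum, then split the observable into a bounded main part (for which $\psi$-mixing yields standard exponential concentration via block decomposition or the moment-generating function) and a tail supported on $\{a_k \geq M\}$ whose contribution is controlled via Markov's inequality and the hypothesis $E(A_1^t) < \infty$, with the truncation level $M$ chosen to grow slowly with $n$ so that both error sources decay exponentially. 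Balancing these two contributions is the technical crux; once it is carried out for general $\psi$-mixing processes, the specialization to Theorem \ref{Levy guji} via Gauss measure is routine.
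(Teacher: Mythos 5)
Your proposal follows essentially the same route as the paper: Theorem \ref{Levy guji} is deduced by applying the general Chernoff-type estimate (Theorem \ref{estimate theorem}) to the partial-quotient process under the Gauss measure, after checking stationarity, ergodicity, $\psi$-mixing and $E_{\mu}(a_1^t)<\infty$ for every $0<t<1$. Two minor differences: the paper identifies the limiting constant by computing $-\int_{[0,1)}\log x\, d\mu = \pi^2/(12\log 2)$ directly rather than via a Borel--Cantelli argument, and it leaves the passage from $\mu$ to $\mathcal{L}$ implicit, whereas you correctly supply it through the bounded Radon--Nikodym derivative. Your closing sketch of how the general estimate itself should be proved (truncating the observable at a level growing with $n$) is not what the paper does --- it instead approximates $X_1$ by the finite-range quantity $Y_m=P_m/Q_m$, uses $\psi$-mixing to obtain an approximate factorization of $E(X_1^{t}|g|)$, and applies H\"older's inequality over arithmetic progressions of indices --- but this does not affect the validity of your deduction of Theorem \ref{Levy guji}, which legitimately takes Theorem \ref{estimate theorem} as given.
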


Furthermore, we will prove a more general version of Theorem \ref{Levy guji} under the setting of so-called random continued fractions (see Theorem \ref{estimate theorem} below). Now we are ready to state this setting. Let $(\Omega, \mathcal{F}, P)$ be a probability space and $\{A_n, n \geq 1\}$ be a stochastic  process defined on $(\Omega, \mathcal{F}, P)$, taking values in the measurable space $(\{1, 2, \cdots, n, \cdots \}, \mathcal{C})$, where $\mathcal{C}$ is the power set of $\{1, 2, \cdots, n, \cdots \}$. For any $\omega \in \Omega$, we define
\begin{align}\label{random expansion}
X(\omega) := [A_1(\omega),A_2(\omega), \cdots, A_n(\omega),\cdots]=\dfrac{1}{A_1(\omega) +\dfrac{1}{A_2(\omega) + \ddots +\dfrac{1}{A_n(\omega)+ \ddots}}}.
\end{align}
Just as for infinite series, the question naturally arises as to whether the right-hand side of (\ref{random expansion}) formally defined is convergent. Fortunately, the convergence of the right-hand side of (\ref{random expansion}) is assured by Theorem 10 in \cite{les76} and Proposition \ref{h} in Section 2 guarantees that $X$ is a random variable taking values in the unit interval $(0,1)$. We say that $X$ is a random continued fractions generated by the stochastic  process $\{A_n, n \geq 1\}$. Different models of random continued fractions have been considered in literature for instance \cite{LP12}, \cite{Lorentzen13}, \cite{Lyons00},\cite{SSU01} and the references therein.

For any $\omega \in \Omega$ and $n \in \mathbb{N}$ , the two quantities $P_n$ and $Q_n$ are defined as
\[
\dfrac{P_n(\omega)}{Q_n(\omega)}:=[A_1(\omega),A_2(\omega),\cdots A_n(\omega)]= \dfrac{1}{A_1(\omega) +\dfrac{1}{A_2(\omega) + \ddots +\dfrac{1}{A_n(\omega)}}},
\]
where $P_n(\omega)$ and $Q_n(\omega)$ are relatively prime integers. We say that $P_n/Q_n$ is the $n$-th convergent of $X$. Moreover, if the limit $\lim\limits_{n \to \infty}\frac{1}{n}\log Q_n$ exists, such limit is said to be the L\'{e}vy constant of $X$.

In this paper, we study a limit theorem for $\{Q_n, n \geq 1\}$ and a corresponding Chernoff-type estimate. We use the notation $E(\xi)$ to denote the expectation of a random variable $\xi$. The definitions of ergodic and $\psi$-mixing stochastic process and some related ones as well will be given in Section 2. Firstly we obtain the following L\'{e}vy-type metric theorem which states that the L\'{e}vy constant of random continued fractions exists and equals a constant almost surely (a.s.).

\begin{thm}\label{random constant}
Let $\{A_n, n \geq 1\}$ be a stochastic process taking values in natural numbers.
If $\{A_n, n \geq 1\}$ is  ergodic and $E(\log A_1)< \infty$, then
\begin{equation}\label{random constant equation}
\lim\limits_{n \to \infty}\dfrac{1}{n}\log Q_n = -\int_{\Omega}\log X dP,\ \ \ \ P-a.s.
\end{equation}
\end{thm}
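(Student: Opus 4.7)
The plan is to mimic L\'evy's argument for Theorem \ref{real Levy} by substituting the Gauss map dynamics on $[0,1)$ with the shift on the stationary ergodic sequence $\{A_n\}$. Pass to the canonical path space by setting $\mu = A_\ast P$ on $(\mathbb{N}^{\mathbb{N}}, \mathcal{B})$ where $A(\omega) = (A_1(\omega), A_2(\omega), \ldots)$; stationarity and ergodicity of $\{A_n\}$ mean that the left shift $\sigma$ preserves $\mu$ and is $\mu$-ergodic. For each $k \geq 0$ define $X_k := [A_{k+1}, A_{k+2}, \ldots]$, so $X_0 = X$ and $X_k = X \circ \sigma^k$ (in the obvious sense), and $\{X_k\}_{k \geq 0}$ is itself a stationary ergodic real-valued process. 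I will then establish, by induction on $n$, the purely algebraic identity
\[
|Q_n X - P_n| \;=\; \prod_{k=0}^{n} X_k.
\]
This carries over verbatim from the deterministic continued-fraction computation: the relation $X = (P_n + X_n P_{n-1})/(Q_n + X_n Q_{n-1})$ gives $Q_n X - P_n = -X_n (Q_{n-1}X - P_{n-1})$, and the base case is $|Q_0 X - P_0| = X = X_0$. Combining this with the standard bounds $1/(2 Q_{n+1}) \leq |Q_n X - P_n| \leq 1/Q_{n+1}$ (whose only input is $A_k \geq 1$, via the recursion $Q_{n+1} = A_{n+1}Q_n + Q_{n-1}$) produces $\log Q_{n+1} = -\sum_{k=0}^{n} \log X_k + O(1)$.

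For the ergodic step, since $X = 1/(A_1 + [A_2, A_3, \ldots])$ with $[A_2, A_3, \ldots] \in (0,1)$, one has $1/(A_1+1) < X < 1/A_1$, hence $|\log X| \leq \log(A_1 + 1) \leq \log 2 + \log A_1$. The hypothesis $E(\log A_1) < \infty$ therefore places $\log X$ in $L^1(P)$, and Birkhoff's ergodic theorem applied to $(\sigma, \mu, \log X)$ gives
\[
\frac{1}{n+1} \sum_{k=0}^{n} \log X_k \;\longrightarrow\; \int_{\Omega} \log X \, dP \qquad P\text{-a.s.}
\]
Dividing the identity $\log Q_{n+1} = -\sum_{k=0}^{n}\log X_k + O(1)$ by $n+1$, letting $n \to \infty$, and re-indexing then yields the assertion \eqref{random constant equation}.

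The main technical point is the transfer of ergodicity of $\{A_n\}$ to ergodicity of the derived process $\{X_k\}$, that is, rigorously identifying $X_k$ with $X \circ \sigma^k$ on the path space and invoking the standard fact that a measurable factor of an ergodic measure-preserving system remains ergodic. Once the shift framework is in place, the remaining ingredients --- the algebraic product identity, the best-approximation bound, and the integrability of $\log X$ --- are routine. Verifying that $X$ is itself measurable into $(0,1)$ (so that the compositions and the path-space interpretation are legitimate) is exactly the content of Proposition \ref{h} alluded to in the introduction.
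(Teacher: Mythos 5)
Your proposal is correct and follows essentially the same route as the paper: the product identity $\prod_k X_k = |Q_{n}X - P_{n}|$ together with the approximation bounds gives $\log Q_n = -\sum_k \log X_k + O(1)$, ergodicity is transferred to $\{X_k\}$ as a measurable factor of the shift (the paper's Proposition 3 and Proposition 4, packaged as Corollary 1), integrability of $\log X$ follows from $(A_1+1)^{-1} < X \leq A_1^{-1}$, and Birkhoff's ergodic theorem finishes the argument. The only difference is presentational (you pass explicitly to the canonical path space rather than citing the factor lemma for stationary ergodic processes), so no further comment is needed.
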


The convergence in (\ref{random constant equation}) implies that $\{\frac{1}{n}\log Q_n, n \geq 1\}$ converges to $-\int_{\Omega}\log X dP$ in probability, that is, for any $\delta > 0$,
\begin{align}\label{in probability}
\lim\limits_{n \to \infty}P\left(\left|\dfrac{1}{n}\log Q_n + \int_{\Omega}\log X dP\right| \geq \delta\right) =0.
\end{align}
A natural question is what the rate of convergence in (\ref{in probability}) is. We prove that such rate is at most exponential by the following theorem which is a type of Chernoff estimate on the convergence of $\{\frac{1}{n}\log Q_n, n \geq 1\}$.

\begin{thm}\label{estimate theorem}
Let $\{A_n, n \geq 1\}$ be a stochastic process taking values in natural numbers.
If $\{A_n, n \geq 1\}$ is $\psi$-mixing and $E(A_1^t)< \infty$ for each $0< t< 1$, then for any $\delta > 0$, there exist $N > 0$, $B > 0$, $\alpha > 0$ such that for all $n \geq N$, we have
\[
P\left(\left|\dfrac{1}{n}\log Q_n + \int_{\Omega}\log X dP\right| \geq \delta\right) \leq B\exp(-\alpha n).
\]
\end{thm}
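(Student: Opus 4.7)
The plan is to reduce the statement to a Chernoff-type bound for the Birkhoff sum of $-\log X$ over the shift $\sigma$ on the underlying process space (defined by $A_k(\sigma\omega) = A_{k+1}(\omega)$, so that $T^k X(\omega) = X(\sigma^k\omega)$), and then to combine a moment generating function estimate with an exponential inequality for $\psi$-mixing sequences. Iterating the recursion $X = (P_n + T^n X\cdot P_{n-1})/(Q_n + T^n X \cdot Q_{n-1})$ gives
$$|XQ_n - P_n| \;=\; X\prod_{k=1}^n T^k X \;=\; \prod_{k=0}^n T^k X,$$
and coupling this with the classical two-sided bounds $\tfrac{1}{Q_n+Q_{n+1}} \leq |XQ_n - P_n| \leq \tfrac{1}{Q_{n+1}}$ produces, uniformly in $\omega$,
$$\log Q_n \;=\; -\sum_{k=0}^{n-1}(\log X)\circ\sigma^k \;+\; O(1).$$
Consequently, for $n$ so large that the $O(1)/n$ correction is smaller than $\delta/2$, the event in the theorem is contained in $\{|n^{-1}S_n - E(-\log X)| \geq \delta/2\}$, where $S_n = \sum_{k=0}^{n-1}(-\log X)\circ\sigma^k$.

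Next I verify that the moment generating function of $-\log X$ is finite in a neighborhood of the origin. From $1/(A_1+1) < X \leq 1/A_1$ one obtains $0 \leq -\log X < \log(A_1+1)$. Hence for $\lambda\in(0,1)$,
$$E\bigl(e^{\lambda(-\log X)}\bigr) \;=\; E(X^{-\lambda}) \;\leq\; E\bigl((A_1+1)^\lambda\bigr) \;\leq\; 2^\lambda\, E(A_1^\lambda) \;<\; \infty,$$
by the hypothesis $E(A_1^t)<\infty$ for every $t\in(0,1)$; for $\lambda\leq 0$ the bound $E(X^{-\lambda})\leq 1$ is trivial. Thus $\Lambda(\lambda) = \log E(e^{\lambda(-\log X)})$ is finite on $(-\infty,1)$, and in particular is smooth and strictly convex near $0$ with $\Lambda'(0) = E(-\log X)$.

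Finally I apply an exponential inequality for $\psi$-mixing sums to $S_n$. Since $X\circ\sigma^k$ is measurable with respect to $\sigma(A_{k+1},A_{k+2},\ldots)$, the sequence $\{(-\log X)\circ\sigma^k\}_{k\geq 0}$ inherits the $\psi$-mixing property of $\{A_n\}$. The standard block scheme partitions $\{0,\ldots,n-1\}$ into $\ell$ pairs of a long block (size $p$) and a spacer block (size $q$), and uses the $\psi(q)$-coefficient to bound the joint MGF of the long-block sums by the product of their individual MGFs up to a factor $(1+\psi(q))^\ell$; the short-block contributions are absorbed via H\"{o}lder together with the local MGF estimate of the previous step. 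Markov's exponential inequality, followed by optimization of $\lambda$ in a small neighborhood of $0$, then delivers the desired bound $P(|n^{-1}S_n - E(-\log X)|\geq \delta/2) \leq B\exp(-\alpha n)$.

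The principal technical obstacle is the unboundedness of $-\log X$: its MGF is only finite for $\lambda<1$, so $\lambda$ must stay in a restricted interval throughout the optimization, and the block sizes $p,q$ must be tuned so that both the mixing error $(1+\psi(q))^\ell$ and the short-block contributions are swallowed by the main exponential rate without pushing $\lambda$ outside the admissible range. Balancing these three scales (block length, mixing gap, and exponential parameter) as functions of $n$ is where most of the care is concentrated.
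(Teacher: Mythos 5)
Your reduction of the event to the Birkhoff sum $S_n=\sum_{k=0}^{n-1}(-\log X)\circ\sigma^k$ via $\log Q_n=-\sum_{k=1}^{n}\log X_k+O(1)$ is exactly the paper's Proposition 2 and Lemma 1, and your verification that $E(X^{-\lambda})\leq 2^{\lambda}E(A_1^{\lambda})<\infty$ for $\lambda\in(0,1)$ is also what the paper uses. The problem is the sentence ``Since $X\circ\sigma^k$ is measurable with respect to $\sigma(A_{k+1},A_{k+2},\ldots)$, the sequence $\{(-\log X)\circ\sigma^k\}$ inherits the $\psi$-mixing property of $\{A_n\}$.'' This is precisely what fails, and it is the central difficulty of the theorem. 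The $\psi$-mixing hypothesis on $\{A_n, n\geq 1\}$ controls covariances between $f\in\sigma(A_1,\ldots,A_m)$ and $g\in\sigma(A_{m+n},A_{m+n+1},\ldots)$; but $X_1=X$ depends on the \emph{entire} sequence $(A_1,A_2,\ldots)$, so a function of the ``past block'' $(X_1,\ldots,X_m)$ of your process is not measurable with respect to any finite initial segment of the $A$'s, and the mixing inequality cannot be invoked to decouple it from a future block. The paper explicitly flags this in the introduction (the mixing property of $\{A_n\}$ does not transfer to that of $\{X_n\}$) as the reason why off-the-shelf large deviation and exponential inequalities for mixing stationary sequences do not apply here. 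As written, your block scheme rests on an unjustified --- indeed false in general --- premise, so the key decoupling step has no proof.

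The repair is the content of the paper's Lemmas 2 and 3: approximate $X_1$ by the finite truncation $Y_m=P_m/Q_m\in\sigma(A_1,\ldots,A_m)$, use $|X_1-Y_m|\leq 2^{-(m-1)}$ (hence $|X_1^{\pm t}-Y_m^{\pm t}|\leq 2^{-(m-2)t}$) to transfer the $\psi$-mixing bound from $Y_m$ to $X_1$ at the cost of a factor $(1+\varepsilon)^2$, valid only when the ``future'' function $g$ lives in $\sigma(A_N,A_{N+1},\ldots)$ with $N=2N_1$. Because this decoupling works only across a fixed gap $N_1$, the paper does not use consecutive long blocks with spacers; instead it decimates $\{1,\ldots,n\}$ into $N_1$ arithmetic progressions of common difference $N_1$, applies H\"older's inequality with exponent $N_1$ (which is why the Chernoff parameter is taken to be $\lambda=t_0/N_1$), and then iterates the one-step decoupling along each progression, picking up $(1+\varepsilon)^{2(k-1)}$ overall. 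You would need to incorporate some version of this truncation-plus-decimation (or else truncate every $X_j$ inside your long blocks and control the accumulated error) before your MGF factorization and the final optimization over $\lambda$ can go through; the remaining calibration $\varepsilon=\delta t_0/8$ and the choice of $t_0$ via $\lim_{t\to 0^+}t^{-1}\log E(X_1^{t})=\int_{\Omega}\log X\,dP$ then match what you sketched.
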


 Now we turn to showing the continued fraction expansion of real numbers can be regarded as a special case of random continued fractions. Let $\mathbb{I} = [0,1)\cap\mathbb{Q}^c$ and $\mathcal{B}$ be the Borel $\sigma$-algebra on $\mathbb{I}$, where $\mathbb{Q}^c$ denotes the set of irrational numbers. If $\nu$ is a probability measure on the measurable space $(\mathbb{I}, \mathcal{B})$, then the sequence of partial quotients $\{a_n, n \geq 1\}$ with respect to (w.r.t.) the probability measure $\nu$ forms a stochastic process taking values in natural numbers. Furthermore, if $\nu$ is an invariant measure w.r.t. the Gauss transformation $T$, then the stochastic process $\{a_n, n \geq 1\}$ is stationary.
In 1800, Gauss found such an invariant measure called the Gauss measure, which is given by
\[
\mu(A) = \frac{1}{\log 2}\int_A \frac{1}{1+x}dx
\]
for any Borel set $A \subseteq [0, 1)$ and is equivalent to the Lebesgue measure $\mathcal{L}$. Moreover, the dynamic systems $(\mathbb{I},\mathcal{B},\mu,T)$ is an ergodic system (see \cite{les79}, Theorem 3.5.1) and the partial quotients sequence $\{a_n, n \geq 1\}$ is $\psi$-mixing with exponential rate (see \cite{les84}, Lemma 2.1). If we choose $(\Omega, \mathcal{F}, P)$ = $(\mathbb{I}, \mathcal{B},\mu)$, then $a_n$, $p_n$ and $q_n$ play the same roles as $A_n$, $P_n$ and $Q_n$ respectively in the definition of random continued fractions. In other words, the continued fraction expansion of real numbers is a special model of random continued fractions.

It is worth pointing out that the sequence of partial quotients $\{a_n, n \geq 1\}$ is an ergodic and $\psi$-mixing  stochastic process (see \cite{les84}, Lemma 2.1) and it is not difficult to check that
\[
E(\log a_1) = \int_{[0, 1)}\log a_1(x)d\mu(x) = \dfrac{1}{\log2}\sum\limits_{k= 1}^{\infty}\log k\cdot \log\left(1 + \dfrac{1}{k(k+2)}\right) < \infty.
\]
Therefore, $\{a_n, n \geq 1\}$ satisfies the conditions of Theorem \ref{random constant}. Thus the application of Theorem \ref{random constant} to $\{a_n, n \geq 1\}$ yields that for $\mu$-almost every $x \in [0, 1)$,
\[
\lim\limits_{n \to \infty}\dfrac{1}{n}\log q_n(x) = -\int_{[0, 1)}\log xd\mu = \dfrac{\pi^2}{12\log2},
\]
which is the result of Theorem \ref{real Levy} given by L\'{e}vy \cite{les82} in 1929. Furthermore, for each $0< t< 1$, the expectation
\[
E(a_1^t) = \int_{[0, 1)}a_1^t(x)d\mu(x) = \dfrac{1}{\log2}\sum\limits_{k= 1}^{\infty}k^t\log\left(1 + \dfrac{1}{k(k+2)}\right) < \infty.
\]
Applying Theorem \ref{estimate theorem} to the continued fraction expansion of real numbers, we obtain a Chernoff-type estimate for $\frac{1}{n}\log q_n$ given in Theorem \ref{Levy guji}.

\begin{rem}
(i) Given a distribution of $A_1$, it induces a measure $\nu$ by
$$\nu(B)=P(X\in B),$$
where $B\in\mathcal{B}$. The stationary of $\{A_n, n\geq 1\}$ guarantees that $\nu$ is $T$-invariant, where $T$ is Gauss transformation on $[0, 1)$. Conversely any $T$-invariant measure gives a distribution of $\{a_n, n\geq1\}$ which is a special case of random continued fractions. Therefore, random continued fractions of the form \eqref{random expansion} can completely characterize the set of the invariant measures with respect to $T$.\\
(ii) Theorem \ref{random constant} is parallel to Theorem \ref{real Levy} and Theorem \ref{estimate theorem} is parallel to Theorem \ref{Levy guji};  while Theorem \ref{Levy guji} is even new, to our knowledge.
\end{rem}

At the end, we compare our work with some literature on the large deviation principle for dynamical systems. Orey and Pelikan \cite{lesOP89} provided an explicit description of the rate function of the large deviations for the convergence of trajectory averages of Anosov diffeomorphisms. Young \cite{lesYou90} dealt with the problem of large deviations for continuous maps in compact metric spaces. Kifer \cite{lesKif90} exhibited a unified approach to large deviations of dynamical systems and stochastic processes basing on the existence of a pressure function and the uniqueness of equilibrium states for certain dense sets of functions. Melbourne and Nicol \cite{MN08} studied a class of nonuniformly hyperbolic systems modelled by a Young tower and with a return time function to the base with a exponential or polynomial decay . For more new results, see \cite{lesCT12}, \cite{lesCR11}, \cite{lesKif04}, \cite{Mel09}, \cite{lesRY08} and the references therein. We emphasize that those papers mentioned above do not cover the case of the Gauss transformation since the Gauss transformation is a piecewise map and is not continuous on [0,1). On the other hand, some authors considered the large deviation principle for stationary process under various dependence structures. Orey and Pelikan \cite{lesOP88} established the large deviations for certain classes of stationary processes satisfying a ratio-mixing condition. Bryc \cite{lesBryc92A} proved the large deviation principle for the empirical field of a stationary $\mathbb{Z}^d$-indexed random field under strong mixing dependence assumptions. Bryc \cite{lesBryc92S} also gave the large deviation principle for the arithmetic means of a sequence which has either fast enough $\varphi$-mixing rate or is $\psi$-mixing. Although we assume that the stochastic process $\{A_n, n \geq 1\}$ is $\psi$-mixing, those results about the large deviation estimates for stationary process cannot be applied to obtain our Theorem \ref{estimate theorem} since the mixing property of $\{A_n, n \geq 1\}$ do not transfer to that of the process $\{X_n, n \geq 1\}$ (see Section 2).

\section{Definitions and properties of random continued fractions}
Let $\mathcal{B}(\mathbb{R})$ be the Borel $\sigma$-algebra on $\mathbb{R}$, for any $B_i \in \mathcal{B}(\mathbb{R})$, $1 \leq i \leq n$, $n \in \mathbb{N}$, the set
 \[
\mathcal{C}(B_1 \times B_2\times \cdots \times B_n) = \{x=(x_1,x_2,\cdots): x_i \in B_i, \ \text{for all}\ i = 1,2,\cdots, n\}
\]
is called a cylinder.

Denote
\[
\mathbb{R^{\mathbb{N}}} := \mathbb{R} \times \mathbb{R} \times \cdots = \{x=(x_1,x_2,\cdots): x_i \in \mathbb{R}, \ \text{for all}\ i = 1,2,\cdots\}
\]
and
\[
\mathcal{B}(\mathbb{R}^{\mathbb{N}}):= \mathcal{B}(\mathbb{R}) \times \mathcal{B}(\mathbb{R}) \times \cdots \  \text{is the $\sigma$-algebra generated by all cylinders}.
\]
The measurable space $(\mathbb{R^{\mathbb{N}}},\mathcal{B}(\mathbb{R}^{\mathbb{N}}))$ is called the direct product of the measurable space $(\mathbb{R},\mathcal{B}(\mathbb{R}))$. For more details about the direct product space, we refer to the reader to Shiryaev's book \cite{les78} in Section 2.2.

Let $\{\xi_n, n \geq 1\}$ be a stochastic process defined on the probability space $(\Omega, \mathcal{F}, P)$, taking values in the measurable space $(\mathbb{R},\mathcal{B}(\mathbb{R}))$.

\begin {defn}
A stochastic process $\{\xi_n, n \geq 1\}$ is stationary if for all $n \in \mathbb{N}$,
\[
P((\xi_1, \xi_2, \cdots ) \in B) = P((\xi_{n+1}, \xi_{n+2}, \cdots ) \in B) \ \ \text{for all}\ B \in \mathcal{B}(\mathbb{R}^{\mathbb{N}}).
\]
\end {defn}

\begin {rem}
(i) A sequence of independent and identically distributed (i.i.d.) random variables is stationary.

(ii) Let $(\Omega, \mathcal{F}, P) = (\mathbb{I}, \mathcal{B},\mu)$, where $\mu$ is the Gauss measure. The sequence of partial quotients $\{a_n, n \geq 1\}$ is stationary (see \cite{les79}, \cite{les82}).
\end {rem}

\begin {defn}
A set $A \in \mathcal{F}$ is invariant with respect to the stochastic process $\{\xi_n, n \geq 1\}$ if there is a set $B \in \mathcal{B}(\mathbb{R}^{\mathbb{N}})$ such that for all $n \in \mathbb{N}$,
\[
A = \{\omega: (\xi_{n}(\omega),\xi_{n+1}(\omega),\cdots ) \in B\}.
\]
\end {defn}

\begin {defn}
A stationary stochastic process $\{\xi_n, , n \geq 1\}$ is ergodic if the probability of every invariant set is either 0 or 1.
\end {defn}

\begin {rem}
(i) A sequence of i.i.d. random variables is ergodic.

(ii) The sequence of partial quotients $\{a_n, n \geq 1\}$ is ergodic (see\cite{les79}, \cite{les80}, \cite{les82}, \cite{les83}).
\end {rem}

For all $n \in \mathbb{N}$, let $\sigma(\xi_1,\cdots, \xi_n)$ be the smallest $\sigma$-algebra that makes all $\xi_1, \cdots, \xi_n$ measurable and $\sigma(\xi_1, \cdots, \xi_n, \cdots)$ be the smallest $\sigma$-algebra that makes $\{\xi_n, n \geq 1 \}$ measurable. 

\begin {defn}
A stationary stochastic process $\{\xi_n, , n \geq 1\}$ is $\psi$-mixing if for all $m, n \in \mathbb{N}$ and for any integrable functions $f \in \sigma(\xi_1,\cdots, \xi_m)$, $g \in \sigma(\xi_{m+n}, \xi_{m+n+1}, \cdots)$, we have
\[
\left|E(fg)-E(f)E(g)\right| \leq \psi(n)E(|f|)E(|g|),
\]
where $f \in \sigma(\xi_1,\cdots, \xi_n)$ means that $f$ is a $\sigma(\xi_1,\cdots, \xi_n)$-measurable function, $\psi$ is non-negative with $\psi(n)\to 0$ as $n \to \infty$.
\end {defn}

\begin {rem}
(i) The $\psi$-mixing stochastic process is ergodic.

(ii) The sequence of partial quotients $\{a_n, n \geq 1\}$ is $\psi$-mixing (see \cite{les84}, Lemma 2.1).
\end {rem}

In the following, we will give some properties of random continued fractions. For all $n \in \mathbb{N}$ and any $\omega \in \Omega$, let
\[
X_n(\omega) = \dfrac{1}{A_n(\omega) +\dfrac{1}{A_{n+1}(\omega) + \ddots}},
\]
it is clear that for any $\omega \in \Omega$, $X_1(\omega) = X(\omega)$ by form (\ref{random expansion}). 

\begin {pr}
 With the convention, $P_{-1} \equiv 1$, $Q_{-1} \equiv 0$, $P_0 \equiv 0$, $Q_0 \equiv 1$. The following properties hold for all $n \in \mathbb{N}$ and any $\omega \in \Omega$.
\begin{align*}
&(i)\ P_n = A_nP_{n-1}+ P_{n-2}, \ Q_n= A_nQ_{n-1}+ Q_{n-2}.& \\
&Moreover,\ Q_n \geq Q_{n-1}+ Q_{n-2} \ \ and\ \  Q_n \geq 2^{\frac{n-1}{2}}.&\\
&(ii)\ P_nQ_{n-1}- Q_nP_{n-1}=(-1)^{n-1}.&\\
&(iii)\ P_nQ_{n-2}- Q_nP_{n-2}=(-1)^nA_n.&\\
&(iv)\ X_{n+1} = -\dfrac{X_1Q_n-P_n}{X_1Q_{n-1}-P_{n-1}}.\ That\ is,\ X_1 = \dfrac{P_n + X_{n+1}P_{n-1}}{Q_n + X_{n+1}Q_{n-1}}.&\\
&(v)\ \dfrac{1}{2Q_nQ_{n-1}} \leq \left|X_1-\dfrac{P_{n-1}}{Q_{n-1}}\right| \leq \dfrac{1}{Q_nQ_{n-1}}.&\\
&(vi)\ X_1\cdot X_2\cdot \cdots \cdot X_n= \left|X_1Q_{n-1}- P_{n-1}\right|.&
\end{align*}
\end {pr}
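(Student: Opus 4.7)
The key observation is that once we fix any $\omega \in \Omega$, the sequence $\{A_n(\omega)\}$ becomes a deterministic sequence of positive integers, so all six items reduce to the classical identities for the convergents of a continued fraction. No probabilistic machinery is needed; each identity is a pointwise algebraic verification, and once (i) is in hand the rest cascade from it.

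My plan is to dispatch parts (i)--(iii) by induction on $n$. For (i), the base case $n=1$ is a direct check against the conventions $P_{-1}\equiv 1, Q_{-1}\equiv 0, P_0\equiv 0, Q_0\equiv 1$. For the inductive step, note that the finite continued fraction $[A_1,\ldots,A_n]$ equals $[A_1,\ldots,A_{n-1}+1/A_n]$; applying the $(n-1)$-case recurrence with the last entry $A_{n-1}$ replaced by $A_{n-1}+1/A_n$ and clearing fractions yields the Euler--Wallis recurrence $P_n = A_nP_{n-1}+P_{n-2}$, $Q_n = A_nQ_{n-1}+Q_{n-2}$. The inequality $Q_n \geq Q_{n-1}+Q_{n-2}$ follows from $A_n\geq 1$, and iterating $Q_n \geq 2Q_{n-2}$ yields $Q_n \geq 2^{(n-1)/2}$. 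For (ii), substitute the recurrence from (i) to get $P_nQ_{n-1}-Q_nP_{n-1} = -(P_{n-1}Q_{n-2}-Q_{n-1}P_{n-2})$, and the sign flip combined with the $n=1$ value closes the induction. For (iii), substitute (i) similarly; the $A_n$-dependent terms cancel, leaving $A_n(P_{n-1}Q_{n-2}-Q_{n-1}P_{n-2}) = (-1)^n A_n$ via (ii).

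For (iv), the cleanest route is to observe that $X_1$ coincides with the finite continued fraction $[A_1,\ldots,A_{n-1},A_n+X_{n+1}]$, since $X_{n+1}$ is exactly the tail remainder at position $n+1$. Applying the recurrence from (i) with the modified last entry yields $X_1 = \frac{(A_n+X_{n+1})P_{n-1}+P_{n-2}}{(A_n+X_{n+1})Q_{n-1}+Q_{n-2}} = \frac{P_n+X_{n+1}P_{n-1}}{Q_n+X_{n+1}Q_{n-1}}$, and solving for $X_{n+1}$ algebraically produces the first form. Part (v) is then immediate: expanding $X_1 - P_{n-1}/Q_{n-1}$ via (iv) and using (ii) reduces the numerator to $(-1)^{n-1}$, giving $|X_1 - P_{n-1}/Q_{n-1}| = \frac{1}{Q_{n-1}(Q_n+X_{n+1}Q_{n-1})}$. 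Since $A_{n+1}\geq 1$ forces $0 < X_{n+1} < 1$, and since $Q_{n-1}\leq Q_n$, we have $Q_n < Q_n+X_{n+1}Q_{n-1} < 2Q_n$, yielding both inequalities. For (vi), iterate the first form of (iv) to obtain the telescoping product $\prod_{k=1}^{n-1} X_{k+1} = (-1)^{n-1}\frac{X_1Q_{n-1}-P_{n-1}}{X_1Q_0-P_0}$; using $Q_0=1, P_0=0$ and multiplying by $X_1$ gives $X_1X_2\cdots X_n = (-1)^{n-1}(X_1Q_{n-1}-P_{n-1})$, and taking absolute values (the left side is positive) finishes.

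There is no serious obstacle here; the proposition is essentially bookkeeping. The only points requiring small care are the boundary conventions in the base case of the induction and the observation $0<X_{n+1}<1$ in (iv)--(v), which ensures that denominators do not vanish and that the two-sided bound in (v) comes out with the correct constants.
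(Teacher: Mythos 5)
Your proof is correct and is exactly the classical Euler--Wallis argument that the paper itself invokes: the paper offers no proof of this proposition beyond the remark that all six items follow as in the real continued fraction case (citing the standard references), and your pointwise induction on $n$ for (i)--(iii), the modified-last-entry identity for (iv), and the telescoping for (v)--(vi) is precisely that standard argument, carried out with the right care about the conventions $P_{-1}\equiv 1$, $Q_{-1}\equiv 0$, $P_0\equiv 0$, $Q_0\equiv 1$ and the bound $0<X_{n+1}<1$.
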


\begin{rem}
All proofs of the above properties are similar to that of the continued fraction expansion of real numbers (see \cite{les79}, \cite{les80}, \cite{les81}, \cite{les76}).
\end{rem}

\begin{pr}
For all $n \geq 1$ and any $\omega \in \Omega$, we have
\[
\left|\dfrac{1}{n}\log Q_n + \dfrac{1}{n}\sum_{k=1}^n\log X_k \right| \leq \dfrac{1}{n}\log 2.
\]
\end{pr}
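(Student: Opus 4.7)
The plan is to derive the inequality directly from Properties (v) and (vi) of the preceding proposition, which together pin down the product $X_1 X_2 \cdots X_n$ to within a factor of $2$ of $1/Q_n$.

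First I would rewrite Property (v) by multiplying through by $Q_{n-1}$. Since $|X_1 Q_{n-1} - P_{n-1}| = Q_{n-1}\bigl|X_1 - P_{n-1}/Q_{n-1}\bigr|$, the double inequality in (v) becomes
\[
\frac{1}{2Q_n} \;\leq\; |X_1 Q_{n-1} - P_{n-1}| \;\leq\; \frac{1}{Q_n}.
\]
Then, by Property (vi), the left-hand side of this inequality equals $X_1 X_2 \cdots X_n$, so
\[
\frac{1}{2Q_n} \;\leq\; X_1 X_2 \cdots X_n \;\leq\; \frac{1}{Q_n}.
\]

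Next I would take logarithms (noting each $X_k \in (0,1)$ so the $\log X_k$ are well-defined real numbers, and $Q_n \geq 2^{(n-1)/2} > 0$ by Property (i)). This yields
\[
-\log 2 - \log Q_n \;\leq\; \sum_{k=1}^n \log X_k \;\leq\; -\log Q_n,
\]
which is the same as
\[
-\log 2 \;\leq\; \log Q_n + \sum_{k=1}^n \log X_k \;\leq\; 0.
\]
Dividing through by $n$ and taking absolute values gives exactly the claimed bound.

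There is no real obstacle here; the only care required is keeping track of the relation between Properties (v) and (vi) so that $Q_{n-1}$ cancels correctly and the product $X_1 \cdots X_n$ is indeed sandwiched between $1/(2Q_n)$ and $1/Q_n$. The bound is uniform in $\omega$ because Properties (v) and (vi) hold for every $\omega \in \Omega$.
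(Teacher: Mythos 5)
Your proposal is correct and follows essentially the same route as the paper: combining Proposition 1 (v) and (vi) to sandwich $X_1X_2\cdots X_n$ between $1/(2Q_n)$ and $1/Q_n$, then taking logarithms and dividing by $n$. The only difference is that you make explicit the intermediate step of multiplying (v) by $Q_{n-1}$, which the paper leaves implicit.
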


\begin{proof}

By Proposition 1 (v) and (vi), we have
\[
\dfrac{1}{2Q_n} \leq X_1\cdot X_2\cdot \cdots \cdot X_n \leq \dfrac{1}{Q_n}.
\]

Taking the logarithm on both sides of the above inequalities and divided by $n$, we obtain that
\[
-\dfrac{1}{n}\log Q_n - \dfrac{1}{n}\log 2 \leq \dfrac{1}{n}\sum_{k=1}^n\log X_k  \leq -\dfrac{1}{n}\log Q_n.
\]

Therefore,
\begin{align*}
\left|\dfrac{1}{n}\log Q_n + \dfrac{1}{n}\sum_{k=1}^n\log X_k \right| \leq \dfrac{1}{n}\log 2.
\end{align*}
\end{proof}

\begin{rem}
By Proposition 2, we know that for any $\varepsilon > 0$, there exists $N > 0$ such that for all $\omega \in \Omega$ and all $n \geq N$, we have
\[
\left|\dfrac{1}{n}\log Q_n(\omega) + \dfrac{1}{n}\sum_{k=1}^n\log X_k(\omega) \right| < \varepsilon.
\]
This means that $\lim\limits_{n \to \infty}\left(\dfrac{1}{n}\log Q_n + \dfrac{1}{n}\sum_{k=1}^n\log X_k \right) =0$ uniformly.
\end{rem}

Next we investigate which properties of $X$ can be inherited from $A$.

\begin{pr}\label{h}
Let $h: (\{1, 2, \cdots, n, \cdots\}^{\mathbb{N}}, \mathcal{S})) \longrightarrow  \left((0, 1), \mathcal{B}((0, 1))\right)$ defined by
\[
h(x_1, x_2, \cdots, x_n, \cdots) = \dfrac{1}{x_1 +\dfrac{1}{x_2 + \ddots +\dfrac{1}{x_n + \ddots}}},
\]
where the measurable space $(\{1, 2, \cdots, n, \cdots\}^{\mathbb{N}}, \mathcal{S})$ is the direct product of measurable space $(\{1, 2, \cdots, n, \cdots\}, \mathcal{C})$. Then $h$ is a measurable function.
\end{pr}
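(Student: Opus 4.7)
The plan is to realize $h$ as a pointwise limit of measurable truncations and then invoke the standard fact that pointwise limits of measurable functions into a metric target space are measurable. For each $n \geq 1$, define
$$h_n(x_1, x_2, \ldots) := [x_1, x_2, \ldots, x_n] = \frac{P_n}{Q_n},$$
where $P_n = P_n(x_1, \ldots, x_n)$ and $Q_n = Q_n(x_1, \ldots, x_n)$ are generated by the recurrence of Proposition 1(i) with initial data $P_{-1}=1$, $Q_{-1}=0$, $P_0=0$, $Q_0=1$. Since $h_n$ depends only on the first $n$ coordinates, it factors as $h_n = g_n \circ \pi_n$, where $\pi_n \colon \{1,2,\ldots\}^{\mathbb{N}} \to \{1,2,\ldots\}^n$ is the coordinate projection and $g_n \colon \{1,2,\ldots\}^n \to (0,1)$ is the explicit rational expression in $x_1,\ldots,x_n$.

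First I would verify that each $h_n$ is $\mathcal{S}/\mathcal{B}((0,1))$-measurable. The projection $\pi_n$ is measurable because $\mathcal{S}$ is generated by cylinders, and since $\mathcal{C}$ is the power set of $\{1,2,\ldots\}$, every function on the countable set $\{1,2,\ldots\}^n$ is automatically measurable into $((0,1),\mathcal{B}((0,1)))$. Hence $h_n = g_n \circ \pi_n$ is measurable.

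Next I would establish pointwise convergence $h_n \to h$ on $\{1,2,\ldots\}^{\mathbb{N}}$. Because each $x_i$ is a positive integer, Proposition 1(i) (applied pointwise to the sequence $(x_1,x_2,\ldots)$) gives $Q_n \geq 2^{(n-1)/2}$, and the convergent estimate in Proposition 1(v) applied to consecutive finite truncations yields
$$\left| h_{n+1}(x) - h_n(x) \right| \;\leq\; \frac{1}{Q_n Q_{n+1}} \;\leq\; 2^{-n+1},$$
so $\{h_n(x)\}_{n\geq 1}$ is Cauchy in $(0,1)$ for every $x$, and its limit is precisely the value $h(x)$ defined by the infinite continued fraction; this is the classical convergence cited in the paper from Theorem 10 of \cite{les76}.

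Finally, since a pointwise limit of $\mathcal{S}/\mathcal{B}((0,1))$-measurable functions is again measurable, $h$ is measurable. I do not anticipate any real obstacle here: the argument is essentially a bookkeeping combination of the classical convergence of simple continued fractions with the observation that, under the power-set $\sigma$-algebra on $\{1,2,\ldots\}$, every function on a finite power is measurable, so the only substantive input is the already-available estimate from Proposition 1(v).
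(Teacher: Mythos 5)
Your proof is correct, but it takes a genuinely different route from the paper's. The paper argues directly on generators: it takes the semi-algebra of fundamental intervals $I(a_1,\dots,a_n)=\{x\in(0,1): a_1(x)=a_1,\dots,a_n(x)=a_n\}$, which generates $\mathcal{B}((0,1))$, and observes that $h^{-1}(I(a_1,\dots,a_n))=\{a_1\}\times\cdots\times\{a_n\}\times\mathbb{N}\times\cdots$ is itself a cylinder in $\mathcal{S}$; measurability is then immediate, with no limiting argument at all. You instead realize $h$ as the pointwise limit of the finite truncations $h_n=P_n/Q_n$, each measurable because it factors through a coordinate projection onto a countable set carrying the power-set $\sigma$-algebra, and you use the convergent estimates (via $Q_nQ_{n+1}\ge 2^{n-1/2}$, or equivalently Proposition 1(ii)) to get pointwise convergence. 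Your route is more robust --- it needs nothing about the image $\sigma$-algebra beyond the standard fact that pointwise limits of measurable maps into a metric space are measurable, and it generalizes to situations where one cannot identify a convenient generating family downstairs --- at the cost of invoking the convergence theory of continued fractions. The paper's route is shorter and exploits the special structure that $h$ carries cylinders to cylinders (indeed it is essentially a bimeasurable bijection onto the irrationals at the level of generators), but it does require knowing that the fundamental intervals generate $\mathcal{B}((0,1))$. One cosmetic point in your write-up: $h_1=1/x_1$ equals $1$ when $x_1=1$, so the truncations need not land in the open interval $(0,1)$; this is harmless (view the $h_n$ as maps into $\mathbb{R}$ and note the limit is irrational, hence in $(0,1)$, with $\mathcal{B}((0,1))$ the trace $\sigma$-algebra), but it is worth a sentence.
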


\begin{proof}
Let
\[
\mathcal{A}= \{I(a_1, a_2, \cdots, a_n): a_i \in \{1, 2, \cdots\},\ \text{for all} \ n \in \mathbb{N},\ i = 1,\cdots,n\},
\]
where $I(a_1, a_2, \cdots, a_n) = \{x \in (0, 1): a_1(x)=a_1, a_2(x)=a_2, \cdots, a_n(x)=a_n\}$, $a_i \in \{1, 2, \cdots \}$  for each $1 \leq i \leq n$, $n \in \mathbb{N}$.

It is well-known that $\mathcal{A}$ is a semi-algebra and can generate the Borel $\sigma$-algebra $\mathcal{B}((0, 1))$. So it suffices to show that
for every $I(a_1, a_2, \cdots, a_n) \in \mathcal{A}$, we have
$$h^{-1}(I(a_1, a_2, \cdots, a_n)) \in \mathcal{S}.$$
In fact,
\begin{align*}
h^{-1}(I(a_1, a_2, \cdots, a_n))= &\{(x_1, \cdots, x_n, \cdots): h(x_1, \cdots, x_n, \cdots) \in I(a_1, \cdots, a_n)\}\\
=&\{(x_1, \cdots, x_n, \cdots): x_1= a_1, \cdots, x_n= a_n\}\\
=&\{a_1\}\times\cdots\times\{a_n\}\times\mathbb{N}\times \cdots.
\end{align*}
Therefore,
\[
h^{-1}(I(a_1, a_2, \cdots, a_n)) \in \mathcal{S}.
\]
\end{proof}

\begin {pr}[\cite{les75}, Theorem 6.1.1 and Theorem 6.1.3]
 Let $\{\xi_n, n \geq 1\}$ be a stochastic process, $F: \mathbb{R}^{\mathbb{N}} \longrightarrow \mathbb{R}$ be a measurable function and $\eta_n = F(\xi_n, \xi_{n+1}, \cdots)$ for all $n \geq 1$. Then if $\{\xi_n, n \geq 1\}$ is stationary, so is $\{\eta_n, n \geq 1\}$; and if $\{\xi_n, n \geq 1\}$ is ergodic, so is $\{\eta_n, n \geq 1\}$.
\end {pr}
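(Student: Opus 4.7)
The plan is to package the passage from $\{\xi_n\}$ to $\{\eta_n\}$ into a single measurable self-map of $(\mathbb{R}^{\mathbb{N}}, \mathcal{B}(\mathbb{R}^{\mathbb{N}}))$, and then reduce both assertions to the corresponding hypothesis on $\{\xi_n\}$ by a straightforward change of variables. Concretely, letting $\sigma(x_1, x_2, \ldots) = (x_2, x_3, \ldots)$ denote the left shift on $\mathbb{R}^{\mathbb{N}}$, I would introduce
\[
\Phi(x_1, x_2, \ldots) := \bigl(F(x_1, x_2, \ldots),\, F(x_2, x_3, \ldots),\, F(x_3, x_4, \ldots),\, \ldots\bigr),
\]
so that $(\eta_n, \eta_{n+1}, \ldots) = \Phi(\xi_n, \xi_{n+1}, \ldots)$ for every $n \geq 1$. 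Measurability of $\Phi$ is the first thing to check: since the product $\sigma$-algebra $\mathcal{B}(\mathbb{R}^{\mathbb{N}})$ is generated by cylinders, it is enough to observe that each coordinate of $\Phi$ is a composition of the measurable function $F$ with a shift, hence measurable.

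For stationarity, the key identity is that, for any $k \geq 0$ and any $B \in \mathcal{B}(\mathbb{R}^{\mathbb{N}})$,
\[
(\eta_{k+1}, \eta_{k+2}, \ldots) \in B \iff (\xi_{k+1}, \xi_{k+2}, \ldots) \in \Phi^{-1}(B).
\]
Applying the stationarity hypothesis for $\{\xi_n\}$ to the measurable set $\Phi^{-1}(B)$ then yields $P((\eta_{k+1}, \eta_{k+2}, \ldots) \in B) = P((\eta_1, \eta_2, \ldots) \in B)$, which is exactly the stationarity of $\{\eta_n\}$.

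For ergodicity, I would take an arbitrary invariant set $A$ for $\{\eta_n\}$, expressed as $A = \{\omega : (\eta_n(\omega), \eta_{n+1}(\omega), \ldots) \in B\}$ for some $B \in \mathcal{B}(\mathbb{R}^{\mathbb{N}})$ and every $n$. The same change of variables rewrites this as $A = \{\omega : (\xi_n(\omega), \xi_{n+1}(\omega), \ldots) \in \Phi^{-1}(B)\}$, which exhibits $A$ as an invariant set for $\{\xi_n\}$. Ergodicity of $\{\xi_n\}$ then forces $P(A) \in \{0, 1\}$, so $\{\eta_n\}$ is ergodic as well.

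The main obstacle is conceptual rather than computational: one must be careful about which $\sigma$-algebras are in play and verify that $\Phi^{-1}$ does send $\mathcal{B}(\mathbb{R}^{\mathbb{N}})$ into itself, which requires the coordinatewise measurability argument above. Once $\Phi$ is in hand, both conclusions are essentially tautologies: everything reduces to pulling back events along the single measurable map $\Phi$, and no additional regularity of $F$ beyond measurability is needed.
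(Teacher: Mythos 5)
Your argument is correct, and it is the standard one: the paper itself gives no proof of this proposition (it is quoted from Durrett, Theorems 6.1.1 and 6.1.3), and the cited proofs there proceed exactly as you do, by pulling events back along the map $\Phi(x) = (F(x), F(\sigma x), \dots)$ and noting that an invariant set for $\{\eta_n\}$ with witness $B$ is an invariant set for $\{\xi_n\}$ with witness $\Phi^{-1}(B)$. Nothing is missing.
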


By Proposition 3, Proposition 4 and $X_n = h(A_n, A_{n+1}, \cdots)$, we can immediately obtain the following corollary.
\begin{cor}
If the stochastic process $\{A_n, n \geq 1\}$ is stationary and ergodic, then $\{X_n, n \geq 1\}$ is also stationary and ergodic respectively.
\end{cor}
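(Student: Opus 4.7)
My plan is to deduce the corollary directly by combining the three ingredients already in place: the identification $X_n = h(A_n, A_{n+1}, \ldots)$, the measurability of $h$ from Proposition \ref{h}, and the transfer principle in Proposition 4 which says that measurable functionals of the future of a stationary (resp.\ ergodic) process yield a stationary (resp.\ ergodic) process.

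Concretely, I would first observe that from the definition of $X_n$ in Section 2,
\[
X_n(\omega) = \dfrac{1}{A_n(\omega) +\dfrac{1}{A_{n+1}(\omega) + \ddots}} = h(A_n(\omega), A_{n+1}(\omega), \cdots),
\]
so $X_n$ is exactly the functional of the shifted sequence $(A_n, A_{n+1}, \cdots)$ obtained by applying $h$. Next, I would note that the natural-number-valued process $\{A_n, n \geq 1\}$ may be regarded as an $\mathbb{R}$-valued process by the obvious inclusion $(\{1,2,\cdots\}, \mathcal{C}) \hookrightarrow (\mathbb{R}, \mathcal{B}(\mathbb{R}))$, under which stationarity and ergodicity (defined via cylinder events in the product $\sigma$-algebra) are preserved; this places us in the framework where Proposition 4 applies, with $\xi_n = A_n$ and $F = h$. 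Proposition \ref{h} supplies the measurability of $F = h$ that Proposition 4 requires.

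Applying Proposition 4 with these choices immediately yields that $\{X_n, n \geq 1\} = \{h(A_n, A_{n+1}, \cdots), n \geq 1\}$ inherits both stationarity and ergodicity from $\{A_n, n \geq 1\}$, which is the content of the corollary. There is no genuine obstacle here: the only mild bookkeeping item is the routine check that interpreting an $\mathbb{N}$-valued process as an $\mathbb{R}$-valued one does not alter the stationarity or ergodicity properties, which is immediate from the compatibility of the product $\sigma$-algebras on $\{1,2,\cdots\}^{\mathbb{N}}$ and $\mathbb{R}^{\mathbb{N}}$.
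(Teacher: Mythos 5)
Your argument is correct and is exactly the paper's own proof: the corollary is obtained by combining the identification $X_n = h(A_n, A_{n+1}, \cdots)$ with the measurability of $h$ from Proposition \ref{h} and the transfer of stationarity and ergodicity under measurable functionals given in Proposition 4. The extra bookkeeping remark about viewing the $\mathbb{N}$-valued process as an $\mathbb{R}$-valued one is harmless and, if anything, slightly more careful than the paper's one-line derivation.
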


\section{Proofs of Theorem \ref{random constant} and Theorem \ref{estimate theorem}}

The following ergodic theorem (see \cite{les78}, Section 5.3) will be needed in the proof of Theorem \ref{random constant}.

\begin{thm}[Ergodic theorem \cite{les78}]\label{theorem 2}
Let $\{\xi_n, n \geq 1\}$ be an ergodic stochastic process. Then for every real-valued measurable function $f$ with $E(|f(\xi_1)|)< \infty$, we have
\begin{equation*}
\lim\limits_{n \to \infty}\frac{1}{n}\sum_{k = 1}^nf(\xi_k(\omega)) = E\left(f(\xi_1)\right),\ \ \ \ P-a.s.
\end{equation*}
\end{thm}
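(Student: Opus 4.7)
The plan is to reduce the statement to the classical Birkhoff pointwise ergodic theorem for a measure-preserving transformation on a probability space, and then invoke ergodicity to identify the almost-sure limit. First I would transport the problem from $(\Omega, \mathcal{F}, P)$ to the sequence space $(\mathbb{R}^{\mathbb{N}}, \mathcal{B}(\mathbb{R}^{\mathbb{N}}))$ via the map $\Phi(\omega) = (\xi_1(\omega), \xi_2(\omega), \ldots)$, which is measurable because each $\xi_k$ is. Let $Q = \Phi_* P$ be the pushforward and $S$ the left shift $S(x_1, x_2, \ldots) = (x_2, x_3, \ldots)$. Stationarity of $\{\xi_n\}$ translates into $S$-invariance of $Q$, while the definition of ergodic stochastic process coincides precisely with ergodicity of $(Q, S)$ in the sense that $Q(B) \in \{0, 1\}$ for every $S$-invariant $B \in \mathcal{B}(\mathbb{R}^{\mathbb{N}})$. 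Setting $F(x) = f(x_1)$, the hypothesis $E(|f(\xi_1)|) < \infty$ becomes $F \in L^1(Q)$, and the Birkhoff averages of $F$ along orbits of $S$ correspond to $\frac{1}{n}\sum_{k=1}^{n} f(\xi_k(\omega))$ under $\Phi$.

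Second, I would establish the maximal ergodic lemma in this setting: for $g \in L^1(Q)$, write $S_k g = g + g\circ S + \cdots + g \circ S^{k-1}$, set $M_n^+ = \max(0, S_1 g, \ldots, S_n g)$, and prove the pointwise estimate $g \geq M_n^+ - M_n^+\circ S$ on $E_n := \{M_n^+ > 0\}$. Integrating over $E_n$ and using both the $S$-invariance of $Q$ and the nonnegativity of $M_n^+$ gives $\int_{E_n} g\, dQ \geq 0$. This Garsia-style one-line argument is the main technical obstacle of the whole proof; everything before it is soft measure-theoretic bookkeeping, and everything after is a direct application of it.

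Third, with the maximal lemma in hand, I would let $\overline{F} = \limsup_{n\to\infty} \frac{1}{n} S_n F$ and $\underline{F} = \liminf_{n\to\infty} \frac{1}{n} S_n F$. Both functions are $S$-invariant by construction, so ergodicity forces each to be $Q$-a.s.\ constant. Applying the maximal lemma to $F - \alpha$ on the $S$-invariant set $\{\overline{F} > \alpha\}$ yields $\int_{\{\overline{F} > \alpha\}} (F - \alpha)\, dQ \geq 0$ for every $\alpha \in \mathbb{R}$, and a symmetric application to $\alpha - F$ on $\{\underline{F} < \alpha\}$ yields the reverse direction. Running these through all rational $\alpha$ collapses $\overline{F}$ and $\underline{F}$ to the common value $\int F\, dQ = E(f(\xi_1))$, $Q$-a.e. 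Pulling back under $\Phi$ delivers the conclusion $P$-a.s., as required.
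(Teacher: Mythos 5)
Your outline is correct: transferring the process to the coordinate sequence space with the shift $S$ and the pushforward law $Q$, proving the maximal ergodic inequality by Garsia's one-line estimate $g \geq M_n^+ - M_n^+\circ S$ on $\{M_n^+>0\}$, and then squeezing $\overline{F}$ and $\underline{F}$ (both $S$-invariant, hence a.s.\ constant by ergodicity) against $\int F\,dQ$ via the sets $\{\overline{F}>\alpha\}$ and $\{\underline{F}<\alpha\}$ is exactly the standard Birkhoff--Khinchin argument. The paper itself gives no proof of this statement --- it is quoted verbatim from Shiryaev's \emph{Probability}, Section 5.3 --- and the proof given there proceeds by precisely this route (maximal ergodic theorem via Garsia, then the pointwise theorem for stationary sequences), so your proposal reconstructs the intended argument rather than offering an alternative to it.
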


Now, we are ready to prove Theorem \ref{random constant}.
\begin{proof}[Proof of Theorem \ref{random constant}]
Since $\{A_n, n\geq 1\}$ is an ergodic stochastic process, by Corollary 1, we obtain that $\{X_n, n\geq 1\}$ is ergodic.

Note that $(A_1+1)^{-1} < X_1 \leq A_1^{-1}$ and
$E\left(\log A_1\right) < \infty$, so $E\left(\left|\log X_1\right|\right) = -E\left(\log X_1\right) \leq E\left(\log(A_1+1)\right) \leq E\left(\log2A_1\right) =\log2 + E(\log A_1) < \infty$. By Theorem 3, we have
\begin{align*}
\lim\limits_{n \to \infty}\dfrac{1}{n}\sum_{k=1}^n\log X_k  = \int_{\Omega}\log X dP,\ \ \ \ P-a.s.
\end{align*}

On the other hand, by Proposition 2, for any $\varepsilon > 0$, there exists $N > 0$ such that for all $\omega \in \Omega$ and all $n \geq N$, we have
\[
\left|\dfrac{1}{n}\log Q_n(\omega) + \dfrac{1}{n}\sum_{k=1}^n\log X_k(\omega) \right| < \varepsilon.
\]
Therefore,
\[
\lim\limits_{n \to \infty}\dfrac{1}{n}\log Q_n = -\int_{\Omega}\log X dP,\ \ \ \  P-a.s.
\]
\end{proof}

The following Lemma 1, Lemma 2 and Lemma 3 are the key lemmas for proving Theorem \ref{estimate theorem}.

\begin {lem}
For any $\delta > 0$, there exists $N_0 > 0$, such that for all $ n > N_0$, we have
\[
\left\{\omega: \left|\dfrac{1}{n}\log Q_n(\omega) + \int_{\Omega}\log XdP\right| \geq \delta\right\}
\subseteq \left\{\omega: \left|\dfrac{1}{n}\sum_{k=1}^n\log X_k(\omega) - \int_{\Omega}\log XdP\right| \geq \frac{\delta}{2}\right\}.
\]
\end {lem}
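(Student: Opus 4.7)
The plan is to reduce this inclusion to the deterministic estimate already proved in Proposition 2; no probabilistic input is needed at this stage. Recall that Proposition 2 gives, for every $\omega$ and every $n \geq 1$, the uniform bound
\[
\left|\frac{1}{n}\log Q_n(\omega) + \frac{1}{n}\sum_{k=1}^n \log X_k(\omega)\right| \leq \frac{\log 2}{n},
\]
so that $\frac{1}{n}\log Q_n(\omega)$ and $-\frac{1}{n}\sum_{k=1}^n \log X_k(\omega)$ agree up to an error that is uniform in $\omega$ and vanishes as $n \to \infty$. Consequently, if the former is far from $-\int_\Omega \log X\, dP$, the latter cannot be much closer once $n$ is large.

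To carry this out, I would fix $\omega$ in the event on the left-hand side and apply the reverse triangle inequality in the form
\[
\left|\frac{1}{n}\sum_{k=1}^n \log X_k(\omega) - \int_\Omega \log X\, dP\right| \geq \left|\frac{1}{n}\log Q_n(\omega) + \int_\Omega \log X\, dP\right| - \left|\frac{1}{n}\log Q_n(\omega) + \frac{1}{n}\sum_{k=1}^n \log X_k(\omega)\right|.
\]
The first term on the right is at least $\delta$ by the assumption on $\omega$, and the second is at most $(\log 2)/n$ by Proposition 2. Choosing $N_0$ large enough that $(\log 2)/n \leq \delta/2$ for all $n > N_0$ (e.g.\ $N_0 = \lceil 2\log 2/\delta \rceil$) then gives the required lower bound $\delta/2$, which is exactly the statement that $\omega$ lies in the right-hand event.

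There is no substantive obstacle here; the lemma is purely a pathwise algebraic step whose role is to transfer the Chernoff-type estimate from $\frac{1}{n}\log Q_n$ to the ergodic average $\frac{1}{n}\sum_{k=1}^n \log X_k$, to which the $\psi$-mixing hypothesis on $\{A_n\}$ (through the induced process $\{X_n\}$) can be applied in the subsequent lemmas. The only minor care needed is with signs: Proposition 2 controls the \emph{sum} $\frac{1}{n}\log Q_n + \frac{1}{n}\sum_{k=1}^n \log X_k$, while the events of interest involve differences with $\int_\Omega \log X\, dP$, so one must rewrite $\frac{1}{n}\sum_{k=1}^n \log X_k - \int_\Omega \log X\, dP$ as $-\bigl(\frac{1}{n}\log Q_n + \int_\Omega \log X\, dP\bigr) + \bigl(\frac{1}{n}\log Q_n + \frac{1}{n}\sum_{k=1}^n \log X_k\bigr)$ before invoking the reverse triangle inequality.
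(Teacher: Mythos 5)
Your argument is correct and is essentially identical to the paper's proof: both fix $\omega$ in the left-hand event, invoke Proposition 2 to bound $\left|\frac{1}{n}\log Q_n + \frac{1}{n}\sum_{k=1}^n\log X_k\right|$ by $\frac{\log 2}{n} \leq \frac{\delta}{2}$ for $n > N_0$, and conclude via the reverse triangle inequality that the ergodic average deviates from $\int_\Omega \log X\, dP$ by at least $\delta - \frac{\delta}{2} = \frac{\delta}{2}$. No gaps.
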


\begin{proof}
It suffices to prove that if for any $\delta > 0$, there exists $N_0 > 0$,
\begin{align}\label{tiaojian dayu}
\left|\dfrac{1}{n}\log Q_n + \int_{\Omega}\log X dP\right| \geq \delta
\end{align}
holds for all $n > N_0$, then
\[
\left|\dfrac{1}{n}\sum_{k=1}^n\log X_k - \int_{\Omega}\log X dP\right| \geq \frac{\delta}{2}.
\]

Since
\begin{align*}
\lim\limits_{n \to \infty} \dfrac{1}{n}\log 2= 0,
\end{align*}
by Proposition 2, we obtain that for any $\delta > 0$, there exists $N_0 >0$ such that for all $\ n > N_0$, we have
\begin{align}\label{yizhi xiaoyu}
\left|\dfrac{1}{n}\log Q_n + \dfrac{1}{n}\sum_{k=1}^n\log X_k \right| \leq \dfrac{1}{n}\log 2 < \frac{\delta}{2}.
\end{align}
By the triangle inequality, we deduce that for all $n > N_0$,
\begin{align*}
& \left|\dfrac{1}{n}\sum_{k=1}^n\log X_k - \int_{\Omega}\log X dP\right|\\
=& \left|\dfrac{1}{n}\sum_{k=1}^n\log X_k +\dfrac{1}{n}\log Q_n - \dfrac{1}{n}\log Q_n - \int_{\Omega}\log X dP\right|\\
\geq & \left|\dfrac{1}{n}\log Q_n + \int_{\Omega}\log X dP\right| - \left|\dfrac{1}{n}\log Q_n + \dfrac{1}{n}\sum_{k=1}^n\log X_k\right| \geq \frac{\delta}{2},
\end{align*}
where the last inequality follows from (\ref{tiaojian dayu}) and (\ref{yizhi xiaoyu}). This completes the proof.
\end{proof}

\begin {lem}
Let $\{A_n, n \geq 1\}$ be a $\psi$-mixing stochastic process and $0< t< 1$. Then for any $\varepsilon > 0$, there exists $N > 0$ such that for any integrable function $g \in \sigma(A_N, A_{N+1}, \cdots)$, we have
\[
E\left(X_1^t|g|\right) \leq (1 + \varepsilon)^2E\left(X_1^t\right)E\left(|g|\right).
\]
\end {lem}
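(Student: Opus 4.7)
The plan is to replace $X_1$ by a convergent $\widehat{X}_m:=P_m/Q_m$, which is $\sigma(A_1,\ldots,A_m)$\nobreakdash-measurable, and then apply the $\psi$\nobreakdash-mixing inequality to $\widehat{X}_m^t$ and $|g|$. Since $\widehat{X}_m\in(0,1]$, the surrogate $\widehat{X}_m^t$ is bounded and hence integrable, so for any $g\in\sigma(A_N,A_{N+1},\ldots)$ with $N>m$ the $\psi$\nobreakdash-mixing hypothesis, applied with $f=\widehat{X}_m^t$, gives
\[
E\bigl(\widehat{X}_m^t|g|\bigr)\;\le\;\bigl(1+\psi(N-m)\bigr)\,E\bigl(\widehat{X}_m^t\bigr)\,E(|g|).
\]

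The comparison between $X_1^t$ and $\widehat{X}_m^t$ is handled by a pointwise (in $\omega$) multiplicative bound. From Proposition~1(v), $|X_1-\widehat{X}_m|\le 1/(Q_mQ_{m+1})$; from the recursion $P_m=A_mP_{m-1}+P_{m-2}$ in Proposition~1(i), together with $P_0=0$, $P_1=1$, one has $P_m\ge 1$, hence $\widehat{X}_m\ge 1/Q_m$; and finally $Q_{m+1}\ge 2^{m/2}$ by Proposition~1(i) again. Combined, these give the uniform bound
\[
\frac{|X_1-\widehat{X}_m|}{\widehat{X}_m}\;\le\;\frac{1}{Q_{m+1}}\;\le\;2^{-m/2}.
\]
Since $0<t<1$ and $u\mapsto u^t$ is monotone, this upgrades to
\[
(1-2^{-m/2})^t\,\widehat{X}_m^t\;\le\;X_1^t\;\le\;(1+2^{-m/2})^t\,\widehat{X}_m^t\qquad\text{for all }\omega\in\Omega,
\]
and in particular $E(\widehat{X}_m^t)\le (1-2^{-m/2})^{-t}E(X_1^t)$.

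Chaining the two bounds gives
\[
E\bigl(X_1^t|g|\bigr)\;\le\;\frac{(1+2^{-m/2})^t}{(1-2^{-m/2})^t}\bigl(1+\psi(N-m)\bigr)\,E(X_1^t)\,E(|g|).
\]
Given $\varepsilon>0$, I would first choose $m$ large enough that the approximation factor $(1+2^{-m/2})^t/(1-2^{-m/2})^t$ does not exceed $1+\varepsilon$, then choose $n$ large enough that $1+\psi(n)\le 1+\varepsilon$, and finally set $N:=m+n$. The total multiplicative constant is then at most $(1+\varepsilon)^2$, which is the claim.

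The main obstacle is producing a purely pointwise multiplicative bound on $X_1^t/\widehat{X}_m^t$ that shrinks in $m$: a crude bound on $|X_1-\widehat{X}_m|$ alone does not suffice, because $\widehat{X}_m$ itself can be as small as $1/Q_m$, and we must pass back from $E(\widehat{X}_m^t)$ to $E(X_1^t)$ at the end with a controlled constant. The miracle that makes this work is the deterministic estimate $|X_1-\widehat{X}_m|/\widehat{X}_m\le 1/Q_{m+1}$, after which the choice ``kill approximation error first, kill mixing error second'' becomes pure bookkeeping on the two small parameters $2^{-m/2}$ and $\psi(n)$.
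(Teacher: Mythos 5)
Your argument is correct, and it follows the same overall strategy as the paper: replace $X_1$ by the finite convergent $Y_m=P_m/Q_m\in\sigma(A_1,\ldots,A_m)$, apply the $\psi$-mixing inequality to the surrogate, and absorb the approximation error. The difference is in how the error is controlled. The paper uses an \emph{additive} bound, $|X_1^t-Y_m^t|\le|X_1-Y_m|^t\le 2^{-(m-1)t}$, and must then choose $m$ so that $2^{-(m-1)t}\le\tfrac12\varepsilon E(X_1^t)$, i.e.\ it needs to calibrate the absolute error against the (positive but possibly small) quantity $E(X_1^t)$, both when bounding $E(Y_m^t|g|)$ and when passing from $E(Y_m^t)$ back to $E(X_1^t)$. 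You instead derive the \emph{multiplicative} bound $|X_1-Y_m|/Y_m\le 1/Q_{m+1}\le 2^{-m/2}$, using $Y_m\ge 1/Q_m$ (valid since $P_m\ge1$ for $m\ge1$) together with Proposition~1(i) and (v); this sandwiches $X_1^t$ between $(1\mp 2^{-m/2})^t\,Y_m^t$ pointwise and makes the entire error a universal factor tending to $1$, independent of the law of $X_1$. Your version is slightly cleaner for exactly this reason --- no comparison with $E(X_1^t)$ is needed and the final bookkeeping reduces to two scalar parameters $2^{-m/2}$ and $\psi(n)$ --- while the paper's additive route is more routine but requires the extra observation $0<E(X_1^t)\le1$. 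Both yield the stated $(1+\varepsilon)^2$ constant.
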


\begin{proof}
Recall that, since $\{A_n, n \geq 1\}$ is a $\psi$-mixing stochastic process, for all $m, n \in \mathbb{N}$ and for any integrable functions $f \in \sigma(A_1,\cdots, A_m)$, $g \in \sigma(A_{m+n}, A_{m+n+1}, \cdots)$, we have
\[
\left|E(fg)-E(f)E(g)\right| \leq \psi(n)E\left(|f|\right)E\left(|g|\right),
\]
with $\psi(n)\to 0$ as $n \to \infty$ being $0 < E(X_1^t) \leq 1$.

For any $\varepsilon > 0$, we can choose $N_1 > 0$, such that $\psi(N_1) \leq \varepsilon$ and

\begin{align}\label{xiaoyu qiwang}
2^{-(N_1 -1)t} \leq \dfrac{1}{2}\varepsilon E\left(X_1^t\right)
\end{align}
holds. The first implies
\begin{align}\label{mixing inequality}
\left|E(fg)-E(f)E(g)\right| \leq \varepsilon E\left(|f|\right)E\left(|g|\right).
\end{align}

For all $m \in \mathbb{N}$, let $Y_m= \frac{P_m}{Q_m}$. Then  $Y_m \in \sigma(A_1,\cdots, A_m)$. Proposition 1 (i) and (v) imply that
\begin{align}\label{Lemma2 in1}
\left|X_1 - Y_m \right| \leq \dfrac{1}{Q_mQ_{m+1}} \leq \dfrac{1}{Q_m^2} \leq 2^{-(m-1)}.
\end{align}

Note that $X_1 > 0$, $Y_{N_1} >0$, $0< t <1$, we know that $\left|X_1^t - Y_{N_1}^t\right|\leq \left|X_1 - Y_{N_1}\right|^t$. Applying $m = N_1$ in (\ref{Lemma2 in1}) leads to $\left|X_1^t - Y_{N_1}^t\right|\leq 2^{-(N_1- 1)t}$, that is,
\begin{align}\label{liangbian}
 -2^{-(N_1- 1)t} \leq X_1^t - Y_{N_1}^t \leq 2^{-(N_1- 1)t}.
\end{align}

Therefore,
\begin{align}\label{jiben1}
E\left(X_1^t|g|\right) \leq E\left((Y_{N_1}^t + 2^{-(N_1 -1)t})|g|\right) =E\left(Y_{N_1}^t|g|\right) + 2^{-(N_1 -1)t}E\left(|g|\right),
\end{align}
for any integrable function $g \in \sigma(A_N, A_{N+1}, \cdots)$ with $N = 2N_1$.

Since $Y_{N_1} \in \sigma(A_1,\cdots, A_{N_1})$, $g \in \sigma(A_N, A_{N+1}, \cdots)$ with $N = 2N_1$, by (\ref{xiaoyu qiwang}) and (\ref{mixing inequality}), we deduce that
\begin{align}\label{jiben2}
E\left(Y_{N_1}^t|g|\right) + 2^{-(N_1 -1)t}E\left(|g|\right) \leq E\left(Y_{N_1}^t\right)E(|g|)(1 + \varepsilon) + \dfrac{1}{2}\varepsilon E\left(X_1^t\right)E\left(|g|\right).
\end{align}
The first inequality of (\ref{liangbian}) implies that
\begin{align}\label{jiben3}
E\left(Y_{N_1}^t\right) \leq E\left(X_1^t +2^{-(N_1 -1)t}\right) = E\left(X_1^t\right) + 2^{-(N_1 -1)t} \leq
(1 + \dfrac{1}{2}\varepsilon)E\left(X_1^t\right),
\end{align}
where the last inequality follows from (\ref{xiaoyu qiwang}).

Combining the inequalities (\ref{jiben1}), (\ref{jiben2}) and (\ref{jiben3}), for any $\varepsilon > 0$ and for any integrable function $g \in \sigma(A_N, A_{N+1}, \cdots)$, we have
\begin{align*}
E\left(X_1^t|g|\right) & \leq (1 + \dfrac{1}{2}\varepsilon)(1 + \varepsilon)E\left(X_1^t\right)E\left(|g|\right) + \dfrac{1}{2}\varepsilon E\left(X_1^t\right)E\left(|g|\right)\\
& \leq (1 + \varepsilon)^2E\left(X_1^t\right)E\left(|g|\right).
\end{align*}
\end{proof}

\begin {lem}
Suppose that for each $0< t< 1$, $E(A_1^t) < \infty$. Let $\{A_n, n \geq 1\}$ be a $\psi$-mixing stochastic process. Then for any $0< t< 1$ and for any $\varepsilon > 0$, there exists $N > 0$ such that for any integrable function $g \in \sigma(A_N, A_{N+1}, \cdots)$, we have
\[
E\left(X_1^{-t}|g|\right) \leq (1 + \varepsilon)^2E\left(X_1^{-t}\right)E\left(|g|\right).
\]
\end {lem}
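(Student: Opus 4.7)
The plan is to parallel the proof of Lemma 2, replacing the map $x\mapsto x^{t}$ with $x\mapsto x^{-t}$. The new difficulty is that $x^{-t}$ is not H\"older continuous on $(0,1]$, so the convenient bound $|X_1^{t}-Y_m^{t}|\leq |X_1-Y_m|^{t}$ used for Lemma 2 has no direct analogue. Instead I will exploit that $X_1$ and $Y_m:=P_m/Q_m$ both lie in the interval $[1/(A_1+1),\,1/A_1]$, together with the fact that $Q_m$ grows like a multiple of $A_1$: the blow-up of the derivative of $u\mapsto u^{-t}$ near $0$ will then be compensated by the faster decay of $|X_1-Y_m|$.

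First I would check that $X_1^{-t}$ is integrable: from $X_1\geq 1/(A_1+1)$ and $A_1\geq 1$ one has $X_1^{-t}\leq (A_1+1)^{t}\leq (2A_1)^{t}$, so $E(X_1^{-t})\leq 2^{t}E(A_1^{t})<\infty$ by hypothesis. The central claim to be established is that there exists a constant $C=C(t)$ such that
\[
|X_1^{-t}-Y_m^{-t}|\leq C\cdot 2^{-m}\qquad\text{for every }\omega\text{ and every }m\geq 1.
\]
To obtain this, Proposition 1(v) gives $|X_1-Y_m|\leq 1/(Q_mQ_{m+1})\leq 1/Q_m^{2}$; a short induction on the recursion of Proposition 1(i), using $Q_m\geq Q_{m-1}+Q_{m-2}$, yields $Q_m\geq A_1\cdot 2^{(m-2)/2}$, whence $|X_1-Y_m|\leq 4 A_1^{-2}\,2^{-m}$. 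Because both $X_1$ and $Y_m$ lie in $[1/(A_1+1),1/A_1]$, the mean value theorem applied to $u\mapsto u^{-t}$ gives
\[
|X_1^{-t}-Y_m^{-t}|\leq t(A_1+1)^{t+1}|X_1-Y_m|\leq 4t\cdot 2^{t+1}\cdot A_1^{t-1}\cdot 2^{-m},
\]
and since $t<1$ and $A_1\geq 1$ force $A_1^{t-1}\leq 1$, the claim follows.

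With this uniform bound in hand, the remainder mimics Lemma 2. Given $\varepsilon>0$, pick $N_1$ so large that $\psi(N_1)\leq \varepsilon$ and $C\cdot 2^{-N_1}\leq \tfrac12\varepsilon E(X_1^{-t})$, and set $N=2N_1$. For any integrable $g\in\sigma(A_N,A_{N+1},\ldots)$ the approximation step gives
\[
E(X_1^{-t}|g|)\leq E(Y_{N_1}^{-t}|g|)+C\cdot 2^{-N_1}E(|g|).
\]
Since $Y_{N_1}^{-t}\in\sigma(A_1,\ldots,A_{N_1})$ and $g$ is measurable with respect to $\sigma(A_{2N_1},A_{2N_1+1},\ldots)$, the $\psi$-mixing inequality with gap $N_1$ yields $E(Y_{N_1}^{-t}|g|)\leq (1+\varepsilon)E(Y_{N_1}^{-t})E(|g|)$, while the uniform approximation again produces $E(Y_{N_1}^{-t})\leq E(X_1^{-t})+C\cdot 2^{-N_1}\leq (1+\tfrac12\varepsilon)E(X_1^{-t})$. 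Combining these three inequalities exactly as in Lemma 2 will give $E(X_1^{-t}|g|)\leq (1+\varepsilon)^{2}E(X_1^{-t})E(|g|)$.

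The main obstacle is the uniform bound in the second paragraph. The balance is tight: the mean value theorem contributes $(A_1+1)^{t+1}$, which grows in $A_1$, and only the refined lower bound $Q_m\geq A_1\cdot 2^{(m-2)/2}$, sharper than the $Q_m\geq 2^{(m-1)/2}$ that appears in Proposition 1, is strong enough to absorb it, with the residual power $A_1^{t-1}$ harmless precisely because the hypothesis $t<1$ makes $t-1$ negative.
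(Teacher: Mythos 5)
Your proof is correct, and the overall skeleton (approximate $X_1^{-t}$ by $Y_{N_1}^{-t}$, apply the $\psi$-mixing inequality with gap $N_1$, absorb the approximation error into $\tfrac12\varepsilon E(X_1^{-t})$) is exactly the paper's. Where you genuinely diverge is in the key uniform estimate $|X_1^{-t}-Y_m^{-t}|\lesssim 2^{-m}$. The paper sidesteps the non-H\"older behaviour of $u\mapsto u^{-t}$ near $0$ by an algebraic cancellation: since $X_1^{-1}=A_1+X_2$ and $Y_m^{-1}=A_1+[A_2,\dots,A_m]$, the large term $A_1$ drops out, giving $|X_1^{-1}-Y_m^{-1}|=|X_2-[A_2,\dots,A_m]|\leq 2^{-(m-2)}$, after which the subadditivity $|a^t-b^t|\leq|a-b|^t$ (valid for all positive $a,b$, however large) yields $|X_1^{-t}-Y_m^{-t}|\leq 2^{-(m-2)t}$. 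You instead keep the map $u\mapsto u^{-t}$ on the original interval $[1/(A_1+1),1/A_1]$, pay the derivative cost $t(A_1+1)^{t+1}$ via the mean value theorem, and recover it with the refined denominator bound $Q_m\geq A_1\,2^{(m-2)/2}$ (your induction is valid: the base cases $Q_1=A_1$, $Q_2\geq A_1$ and the step $\sqrt2+1\geq 2$ all check out), leaving the harmless residual $A_1^{t-1}\leq 1$. Both routes give a bound uniform in $\omega$ and geometric in $m$, which is all the rest of the argument needs; the paper's cancellation trick is shorter and gives rate $2^{-(m-2)t}$, while your estimate is more quantitative in $A_1$ and in fact gives the faster rate $2^{-m}$, though that gains nothing here. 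One small remark: your constant $C=C(t)=4t\cdot2^{t+1}\leq 16$ can be taken absolute, so the choice of $N_1$ depends on $t$ only through $E(X_1^{-t})$, just as in the paper.
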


\begin{proof}
Since $(A_1+1)^{-1} < X_1 \leq A_1^{-1}$ and $E(A_1^t)< \infty$ for each $0< t< 1$, we have
\[
0 < E\left(A_1^t\right) \leq E\left(X_1^{-t}\right) \leq E\left((A_1 + 1)^t\right)\leq 2^tE\left(A_1^t\right) < \infty.
\]
Since $\{A_n, n \geq 1\}$ is a $\psi$-mixing stochastic process, for any $\varepsilon > 0$,
we can choose $N_2 > 0$, such that for all $m \in \mathbb{N}$ and for any integrable function $f \in \sigma(A_1,\cdots, A_m)$, $g \in \sigma(A_{m+N_2}, A_{m+N_2+1}, \cdots)$, we obtain that
\begin{align}\label{mixing 2}
\left|E\left(fg\right)-E\left(f\right)E\left(g\right)\right| \leq \varepsilon E\left(|f|\right)E\left(|g|\right)
\end{align}
and
\begin{align}\label{xiaoyu qiwang2}
2^{-(N_2 -2)t} \leq \dfrac{1}{2}\varepsilon E\left(X_1^{-t}\right).
\end{align}

For all $m \in \mathbb{N}$, let $Y_m= \frac{P_m}{Q_m}$, so $Y_m \in \sigma(A_1,\cdots, A_m)$. Note that $X_1^{-1} = A_1 + X_2$ and $Y_m^{-1} = A_1 + Y_m^{'}$, where $Y_m^{'}= [A_2,A_3,\cdots, A_m ]$.

Therefore,
\begin{align}\label{Lemma3 in1}
\left|X_1^{-1} - Y_m^{-1}\right|= \left|X_2 - Y_m^{'}\right| \leq 2^{-(m-2)},
\end{align}
where the last inequality follows from (\ref{Lemma2 in1}).

Notice that $X_1^{-1} > 0$, $Y_m^{-1} >0$ and $0< t <1$, we know that $\left|X_1^{-t} - Y_m^{-t}\right|\leq \left|X_1^{-1} - Y_m^{-1}\right|^t$. Applying $m = N_2$ in (\ref{Lemma3 in1}) leads to $\left|X_1^{-t} - Y_{N_2}^{-t}\right|\leq 2^{-(N_2- 2)t}$, that is,
\begin{align}\label{liangbian2}
-2^{-(N_2- 2)t} \leq X_1^{-t} - Y_{N_2}^{-t} \leq 2^{-(N_2- 2)t}.
\end{align}

Therefore,
\begin{align}\label{L3jiben1}
E\left(X_1^{-t}|g|\right) \leq E\left((Y_{N_2}^{-t} + 2^{-(N_2 -2)t})|g|\right) = E\left(Y_{N_2}^{-t}|g|\right) + 2^{-(N_2 -2)t}E\left(|g|\right),
\end{align}
for any integrable function $g \in \sigma(A_N, A_{N+1}, \cdots)$ with $N = 2N_2$.

Since $Y_{N_2}^{-1} \in \sigma(A_1,\cdots, A_{N_2})$ and $g \in \sigma(A_N, A_{N+1}, \cdots)$ with $N = 2N_2$, by (\ref{mixing 2}) and (\ref{xiaoyu qiwang2}), we deduce that
\begin{align}\label{L3jiben2}
E\left(Y_{N_2}^{-t}|g|\right) + 2^{-(N_2 -2)t}E\left(|g|\right) \leq E\left(Y_{N_2}^{-t}\right)E\left(|g|\right)(1 + \varepsilon) + \dfrac{1}{2}\varepsilon E\left(X_1^{-t}\right)E\left(|g|\right).
\end{align}
The first inequality of (\ref{liangbian2}) implies that
\begin{align}\label{L3jiben3}
E\left(Y_{N_2}^{-t}\right) \leq E\left(X_1^{-t} +2^{-(N_2 -2)t}\right) = E\left(X_1^{-t}\right) + 2^{-(N_2 -2)t} \leq
(1 + \dfrac{1}{2}\varepsilon)E\left(X_1^{-t}\right),
\end{align}
where the last inequality follows from (\ref{xiaoyu qiwang2}).

In conjunction with the inequalities (\ref{L3jiben1}), (\ref{L3jiben2}) and (\ref{L3jiben3}), for any $\varepsilon > 0$ and for any integrable function $g \in \sigma(A_N, A_{N+1}, \cdots)$, we have
\begin{align*}
E\left(X_1^{-t}|g|\right) & \leq (1 + \dfrac{1}{2}\varepsilon)(1 + \varepsilon)E\left(X_1^{-t}\right)E\left(|g|\right) + \dfrac{1}{2}\varepsilon E\left(X_1^{-t}\right)E\left(|g|\right)\\
& \leq (1 + \varepsilon)^2E\left(X_1^{-t}\right)E\left(|g|\right).
\end{align*}
\end{proof}

Now we are ready to prove Theorem \ref{estimate theorem}. The main idea is from the technique of Chernoff-type estimate for the i.i.d. sequence (see \cite{les75}, Section 1.9). The difficulty here is that we do not have the independence assumption.

\begin{proof}[Proof of Theorem \ref{estimate theorem}]
For any $\delta > 0$, by Lemma 1, there exists $N_0 > 0$, such that for all $n > N_0$, we have
\begin{align*}
&P\left(\left|\dfrac{1}{n}\log Q_n + \int_{\Omega}\log X dP\right| \geq \delta\right)
\leq P\left(\left|\dfrac{1}{n}\sum_{k=1}^n\log X_k - \int_{\Omega}\log X dP\right| \geq \frac{\delta}{2}\right),\\
\end{align*}
which indicates that it is enough to estimate
\[
U:= P\left(\left|\dfrac{1}{n}\sum_{k=1}^n\log X_k - \int_{\Omega}\log X dP\right| \geq \frac{\delta}{2}\right)
\]
from above.

Obviously, $U$ can be divided into the following two parts
\[
\uppercase\expandafter{\romannumeral1} := P\left(\sum_{k=1}^n\log X_k  \geq  n(\int_{\Omega}\log X dP + \frac{\delta}{2})\right)
\]
and
\[
\uppercase\expandafter{\romannumeral2} := P\left(\sum_{k=1}^n\log X_k  \leq  n(\int_{\Omega}\log X dP - \frac{\delta}{2})\right),
\]
since $U = \uppercase\expandafter{\romannumeral1} + \uppercase\expandafter{\romannumeral2}$.

In the following, we will estimate $\uppercase\expandafter{\romannumeral1}$ and $\uppercase\expandafter{\romannumeral2}$ respectively.

First, we estimate $\uppercase\expandafter{\romannumeral1}$.
Let $\lambda > 0$ be a parameter, the quantity $\uppercase\expandafter{\romannumeral1}$ can be written as
\[
P\left(\exp({\lambda\sum_{k=1}^n\log X_k}) \geq \exp({n\lambda(\int_{\Omega}\log X dP + \frac{\delta}{2})})\right)
\]
and the Markov's inequality deduces that
\begin{align}\label{Markov}
\uppercase\expandafter{\romannumeral1} \leq & \exp\left({-n\lambda(\int_{\Omega}\log X dP + \frac{\delta}{2})}\right)E\left(\exp(\lambda\sum_{k=1}^n\log X_k)\right)\\
= & \exp\left({-n\lambda(\int_{\Omega}\log X dP + \frac{\delta}{2})}\right)E\left(X_1^{\lambda}X_2^{\lambda} \cdots X_n^{\lambda}\right).\notag
\end{align}

Now, it remains to estimate $E\left(X_1^{\lambda}X_2^{\lambda} \cdots X_n^{\lambda}\right)$. If $X_1, X_2, \cdots, X_n$ are independent for all $n \geq 1$, then
\begin{align}\label{duli}
E\left(X_1^{\lambda}X_2^{\lambda} \cdots X_n^{\lambda}\right) = \prod\limits_{i=1}^nE\left(X_i^{\lambda}\right).
\end{align}
However, here $X_1, X_2, \cdots, X_n$ are not independent, since $\{A_n, n \geq 1\}$ are not independent. It means that we do not have the equality (\ref{duli}).
We can improve the techniques about Chernoff-type estimate for the independent random variables (see \cite{les75}, Section 1.9) by Lemma 2.


Notice that $|e^z-1| \leq e^{|z|}-1 \leq |z|e^{|z|}$ for every $z \in \mathbb{R}$, so for any $0< \epsilon < 1$, $|\theta| \leq \epsilon $ and $0 < x < 1$, we have
\begin{equation}\label{budengshi}
\left|x^\theta -1\right| = |e^{\theta \log x} -1| \leq |\theta\log x|e^{|\theta \log x|} \leq |\theta\log x|
\left(\frac{1}{x}\right)^{\epsilon}.
\end{equation}
For any $0 < t< 1$ and $0< \varepsilon< t$, let $f_{\theta}(x) = (x^{t+\theta}-x^{t})/\theta$ and $\varphi(x) = \left(\frac{1}{x}\right)^{\varepsilon}\log \frac{1}{x}$ for every $x \in (0,1)$, where $0<|\theta| \leq \epsilon$. It is clear that $|f_{\theta}(x)| \leq \varphi(x)$ and $f_{\theta}(x) \rightarrow x^t\log x$ as $\theta \to 0$. Since $(A_1+1)^{-1} < X_1 \leq A_1^{-1} \leq 1$ and our assumption $E(A_1^t)< \infty$ for each $0< t< 1$, we have $E(\varphi(X_1)) \leq E\left((A_1+1)^\varepsilon\log(A_1+1)\right) \leq 2^\varepsilon E(A_1^\varepsilon\log2A_1) < \infty$.

Let $F(x) = P(X_1 \leq x)$. An application of the dominated convergence theorem shows that the derivative
\begin{align}\label{derivate}
\left(E(X_1^t)\right)^{'} &= \lim\limits_{\theta \to 0}\frac{E(X_1^{t+\theta}) - E(X_1^t)}{\theta} \\ \notag
& = \lim\limits_{\theta \to 0} \int \dfrac{x^\theta - 1}{\theta}x^t dF(x) \\ \notag
& = \int x^t\log x dF(x) = E\left(X_1^t\log X_1\right)\notag.
\end{align}
Moreover, $E(|X_1^t\log X_1|) \leq E(-\log X_1) \leq E(\log (A_1+1)) \leq \log 2 +  E(\log A_1) < \infty$ since $E(A_1^t)< \infty$ for each $0< t< 1$.
By L'Hospital's rule and (\ref{derivate}), we deduce that
\begin{align*}
\lim\limits_{t \to 0^+}\dfrac{\log E\left(X_1^t\right)}{t} = \lim\limits_{t \to 0^+}\dfrac{E\left(X_1^t\log X_1\right)}{E\left(X_1^t\right)} = \int_{\Omega}\log X dP,
\end{align*}
where the second equality follows from the dominated convergence theorem and our assumption $E(A_1^t)< \infty$ for each $0< t< 1$. Therefore, we can choose $ 0< t_0 < 1$, such that
\begin{align}\label{zuihou 1}
\left|\dfrac{\log E\left(X_1^{t_0}\right)}{t_0} - \int_{\Omega}\log X dP\right| < \frac{\delta}{8}.
\end{align}

Let $\varepsilon = \dfrac{1}{8}\delta t_0$.
By Lemma 2, there exists $N_1 > 0$, such that for any integrable function $g \in \sigma(A_{N_1}, A_{N_1 + 1}, \cdots)$, we have
\begin{align}\label{zuihou 2}
E\left(X_1^{t_0}|g|\right) \leq (1 + \varepsilon)^2E\left(X_1^{t_0}\right)E\left(|g|\right).
\end{align}

 Choose $\lambda = \dfrac{t_0}{N_1}$. The following proof will be divided into two cases according to whether $n$ can be divided by $N_1$.

 Case 1. $n = kN_1$ for some $k \in \mathbb{N}$. Denote
\[
\Lambda_i = \{i, N_1 +i, \cdots, (k-1)N_1 +i \} \ (\ 1 \leq i \leq N_1),
\]
then $\Lambda_i \cap \Lambda_j = \emptyset$ for any $1 \leq i \neq j \leq N_1$ and $\bigcup\limits_{i = 1}^{N_1} \Lambda_i = \{1, 2, \cdots, n\}.$

Write
\begin{align}\label{chengji qiwang}
E\left(X_1^\lambda X_2^\lambda \cdots X_n^\lambda\right) =  E\left(\prod_{i=1}^{N_1} \prod_{j \in \Lambda_i}X_j^\lambda\right).
\end{align}

By H\"{o}lder's inequality, we obtain that
\begin{align}\label{Holder}
E\left(\prod_{i=1}^{N_1} \prod_{j \in \Lambda_i}X_j^\lambda\right)
\leq \  \prod_{i=1}^{N_1} \left(E\left(\prod_{j \in \Lambda_i}X_j^{\lambda N_1}\right)\right)^{\frac{1}{N_1}}
= \left(\prod_{i=1}^{N_1} E\left(\prod_{j \in \Lambda_i}X_j^{t_0}\right)\right)^{\frac{1}{N_1}}.
\end{align}

Applying the inequality (\ref{zuihou 2}) for $g = X_{N_1 + 1}^{t_0} \cdots X_{(k-1)N_1 + 1}^{t_0} \in \sigma(A_{N_1 + 1}, A_{N_1 + 2}, \cdots)$, we deduce that
\begin{align*}
E\left(X_1^{t_0}X_{N_1 + 1}^{t_0} \cdots X_{(k-1)N_1 + 1}^{t_0}\right) \leq &\  E\left(X_1^{t_0}\right)E\left(X_{N_1 + 1}^{t_0} \cdots X_{(k-1)N_1 + 1}^{t_0}\right)(1 + \varepsilon)^2.
\end{align*}
Notice that
\begin{align*}
E\left(X_{N_1 + 1}^{t_0} X_{2N_1 + 1}^{t_0}\cdots X_{(k-1)N_1 + 1}^{t_0}\right)= E\left(X_ 1^{t_0} X_{N_1 + 1}^{t_0}\cdots X_{(k-2)N_1 + 1}^{t_0}\right),
\end{align*}
since $\{X_n, n\geq 1\}$ is stationary. Therefore, repeating this procedure $k - 1$ steps, we can obtain the following inequality
\begin{align}\label{zuihou 3}
E\left(\prod_{j \in \Lambda_1}X_j^{t_0}\right) 
\leq \left(E(X_1^{t_0})\right)^k(1 + \varepsilon)^{2(k-1)}.
\end{align}

Similarly, since $\{X_n, n\geq 1\}$ is stationary, we deduce that
\begin{align}\label{zuihou 4}
E\left(\prod_{j \in \Lambda_i}X_j^{t_0}\right) \leq \left(E\left(X_1^{t_0}\right)\right)^k(1 + \varepsilon)^{2(k-1)} \ (i = 2, 3, \cdots, N_1).
\end{align}

Combining (\ref{chengji qiwang}), (\ref{Holder}), (\ref{zuihou 3}) and (\ref{zuihou 4}), we have
\begin{align*}
&E\left(X_1^\lambda X_2^\lambda \cdots X_n^\lambda\right) \leq \left(\prod_{i=1}^{N_1}\left(E\left(X_1^{t_0}\right)\right)^k(1 + \varepsilon)^{2(k-1)}\right)^{\frac{1}{N_1}} \\
& = \left(\left(E(X_1^{t_0})\right)^n (1 + \varepsilon)^{2(n - N_1)}\right)^{\frac{1}{N_1}}
\leq \left(\left(E(X_1^{t_0})\right)^n (1 + \varepsilon)^{2n} \right)^{\frac{1}{N_1}}.
\end{align*}
Therefore, write $\left((E\left(X_1^{t_0}\right))^n (1 + \varepsilon)^{2n} \right)^{\frac{1}{N_1}}$ as the exponential form, we obtain that 
\begin{align*}
\uppercase\expandafter{\romannumeral1} \leq & \exp\left({-n\lambda(\int_{\Omega}\log X dP + \frac{\delta}{2})}\right)E\left(X_1^{\lambda}X_2^{\lambda} \cdots X_n^{\lambda}\right)\\
\leq & \exp\left({-n\lambda(\int_{\Omega}\log X dP + \frac{\delta}{2} - \frac{\log E(X_1^{t_0})}{t_0} - \frac{2}{t_0}\log (1 + \varepsilon))}\right).
\end{align*}

Since $\log(1 + x) \leq x$ for all $x \geq 0$, we know that
\begin{align}\label{log inequality}
\dfrac{2}{t_0}\log (1 + \varepsilon) \leq \dfrac{2}{t_0}\varepsilon = \dfrac{\delta}{4}.
\end{align}

Together with (\ref{zuihou 1}) and (\ref{log inequality}), we deduce that
\begin{align*}
\frac{\delta}{2} + \int_{\Omega}\log X dP - \dfrac{\log E(X_1^{t_0})}{t_0} - \dfrac{2}{t_0}\log (1 + \varepsilon)
\geq \frac{\delta}{2} - \frac{\delta}{8} - \frac{\delta}{4} = \frac{\delta}{8}.
\end{align*}

Therefore, 
\begin{align*}
\uppercase\expandafter{\romannumeral1} \leq \exp\left({-\frac{1}{8}\lambda\delta n}\right).
\end{align*}

Case 2. $n = kN_1 + l$ for some $k \in \mathbb{N}$ and $0 < l < N_1$. Since $X_i^\lambda \leq 1$ for $kN_1 + 1 \leq i \leq n$, we have
$$E\left(X_1^{\lambda}X_2^{\lambda} \cdots X_n^{\lambda}\right) \leq E\left(X_1^{\lambda}X_2^{\lambda} \cdots X_{kN_1}^{\lambda}\right).$$
Thus, by (\ref{Markov}), we deduce that
\begin{align*}
\uppercase\expandafter{\romannumeral1} \leq \exp\left({-kN_1\lambda(\int_{\Omega}\log X(\omega)dP+ \frac{\delta}{2})}\right)E\left(X_1^{\lambda}X_2^{\lambda} \cdots X_{kN_1}^{\lambda}\right) \exp\left({-l\lambda(\int_{\Omega}\log X(\omega)dP + \frac{\delta}{2})}\right).
\end{align*}

By the result in Case 1 for $kN_1$, we obtain that
\begin{align}\label{case2 in1}
\uppercase\expandafter{\romannumeral1} \leq & \exp({-\frac{1}{8}\lambda\delta kN_1})\exp\left({-l\lambda(\int_{\Omega}\log X dP + \frac{\delta}{2})}\right)\\
= &\exp({-\frac{1}{8}\lambda\delta n}) \exp\left(-l\lambda(\int_{\Omega}\log X dP + \frac{3}{8}\delta)\right)\notag.
\end{align}

Let $B_1 = \max\left\{1, \exp\left(-t_0(\int_{\Omega}\log XdP + \dfrac{3}{8}\delta)\right)\right\}$ and $\alpha_1 = \dfrac{1}{8}\lambda\delta$.
So by (\ref{case2 in1}), for any $\delta > 0$ and all $ n \geq N_1$, we have
\[
\uppercase\expandafter{\romannumeral1} \leq B_1\exp(-\alpha_1 n).
\]

Next, we estimate $\uppercase\expandafter{\romannumeral2}$. Let $\tau > 0$ be a parameter, the quantity $\uppercase\expandafter{\romannumeral2}$ can be written as
\[
P\left(\exp({-\tau\sum_{k=1}^n\log X_k}) \geq \exp({-n\tau(\int_{\Omega}\log X dP - \frac{\delta}{2})})\right)
\]
and the Markov's inequality deduces that
\begin{align}\label{fu Markov}
\uppercase\expandafter{\romannumeral2} \leq & \exp\left({n\tau(\int_{\Omega}\log X dP - \frac{\delta}{2})}\right)E\left(\exp(-\tau\sum_{k=1}^n\log X_k)\right)\\
= & \exp\left({n\tau(\int_{\Omega}\log X dP - \frac{\delta}{2})}\right)E\left(X_1^{-\tau} X_2^{-\tau} \cdots X_n^{-\tau}\right).\notag
\end{align}

It remains to estimate $E(X_1^{-\tau}X_2^{-\tau} \cdots X_n^{-\tau})$. In view of (\ref{budengshi}), for any $0< \epsilon < 1$, $|\theta| \leq \epsilon $ and $0 < x < 1$, we have
\begin{equation*}
\left|x^{-\theta} -1\right| \leq |\theta\log x|\left(\frac{1}{x}\right)^{\epsilon}.
\end{equation*}
For any $0 < t< 1$ and $0< \varepsilon< 1-t$, let $f_{\theta}(x) = (x^{-(t+\theta)}-x^{-t})/\theta$ and $\varphi(x) = \left(\frac{1}{x}\right)^{t+\varepsilon}\log \frac{1}{x}$ for every $x \in (0,1)$, where $0<|\theta| \leq \epsilon$. It is clear that $|f_{\theta}(x)| \leq \varphi(x)$ and $f_{\theta}(x) \rightarrow -x^{-t}\log x$ as $\theta \to 0$. Since $(A_1+1)^{-1} < X_1 \leq A_1^{-1} \leq 1$ and $E(A_1^t)< \infty$ for each $0< t< 1$, we have $E(\varphi(X_1)) < \infty$. Being similar to (\ref{derivate}), we have the derivative $(E(X_1^{-s}))^{\prime} = -E(X_1^{-s}\log X_1)$ for any $0 < s <1$. Moreover, $E(|X_1^{-s}\log X_1|) \leq E\left((A_1+1)^s\log(A_1+1)\right) \leq 2^s E(A_1^s\log2A_1) < \infty$ since our assumption $E(A_1^t)< \infty$ for each $0< t< 1$.

Therefore,
\[
\lim\limits_{s \to 0^+}\dfrac{\log E\left(X_1^{-s}\right)}{s} = \lim\limits_{s \to 0^+}\dfrac{-E\left(X_1^{-s}\log X_1\right)}{E\left(X_1^{-s}\right)} = -\int_{\Omega}\log X dP,
\]
where the equalities follow from the L'Hospital's rule. So we can choose $ 0< s_0 < 1$ such that
\begin{align}\label{fu zuihou 1}
\left|\dfrac{\log E\left(X_1^{-s_0}\right)}{s_0} + \int_{\Omega}\log X dP\right| < \frac{\delta}{8}.
\end{align}

Let $\varepsilon = \dfrac{1}{8}\delta s_0$. By Lemma 3, there exists $N_2 > 0$, such that for any integrable function $g \in \sigma(A_{N_2}, A_{N_2 + 1}, \cdots)$, we have
\begin{align}\label{fu zuihou 2}
E\left(X_1^{-s_0}|g|\right) \leq (1 + \varepsilon)^2E\left(X_1^{-s_0}\right)E\left(|g|\right).
\end{align}

Choose $\tau = \dfrac{s_0}{N_2}$. The following proof will be divided into two cases according to whether $n$ can be divided by $N_2$.

Case i. $n = kN_2$ for some $k \in \mathbb{N}$. With the help of Lemma 3, by the similar estimation for $E\left(X_1^{\lambda}X_2^{\lambda} \cdots X_n^{\lambda}\right)$ in the estimate of $\uppercase\expandafter{\romannumeral1}$, we obtain that
\[
E(X_1^{-\tau}X_2^{-\tau} \cdots X_n^{-\tau}) \leq \left(\left(E(X_1^{-s_0})\right)^n (1 + \varepsilon)^{2n} \right)^{\frac{1}{N_2}}.
\]

Therefore, write $\left((E\left(X_1^{-s_0}\right))^n (1 + \varepsilon)^{2n} \right)^{\frac{1}{N_2}}$ as the exponential form, we have
\begin{align*}
\uppercase\expandafter{\romannumeral2} \leq & \exp\left({n\tau(\int_{\Omega}\log X dP - \frac{\delta}{2})}\right)\left(E(\left(X_1^{-s_0}\right))^n (1 + \varepsilon)^{2n} \right)^{\frac{1}{N_2}}\\
= &\exp\left({n\tau(\int_{\Omega}\log X dP -\frac{\delta}{2} + \frac{\log E(X_1^{-s_0})}{s_0} + \frac{2}{s_0}\log (1 + \varepsilon))}\right).\notag
\end{align*}

Note that $\log(1 + x) \leq x$ for all $x \geq 0$, we know that
$$\frac{2}{s_0}\log (1 + \varepsilon) \leq \frac{2}{s_0}\varepsilon = \frac{\delta}{4},$$
by (\ref{fu zuihou 2}), we deduce that
\[
\int_{\Omega}\log X dP + \frac{\log E(X_1^{-s_0})}{s_0} -\frac{\delta}{2} + \frac{2}{s_0}\log (1 + \varepsilon)) \leq \frac{\delta}{8} - \frac{\delta}{2} + \frac{\delta}{4} = -\frac{\delta}{8}.
\]

Therefore,
\[
\uppercase\expandafter{\romannumeral2} \leq \exp\left({-\frac{1}{8}\tau\delta n}\right).
\]

Case ii. $n = kN_2 + l$ for some $k \in \mathbb{N}$ and $0 < l < N_2$. Since $X_i^{-\tau} \geq 1$ for $n+1 \leq i \leq (k + 1)N_2 $, we have
\begin{align}\label{zuihou}
E\left(X_1^{-\tau}X_2^{-\tau} \cdots X_n^{-\tau}\right) \leq E\left(X_1^{-\tau}X_2^{-\tau} \cdots X_{(k + 1)N_2}^{-\tau}\right).
\end{align}

Thus, by (\ref{fu Markov}), (\ref{zuihou}) and the result in Case i for $(k + 1)N_2$, we deduce that
\begin{align*}
\uppercase\expandafter{\romannumeral2} &\leq \exp\left({-\frac{1}{8}\tau\delta (k + 1)N_2}\right)\exp\left((l-N_2)\tau(\int_{\Omega}\log X dP -\frac{\delta}{2})\right)\\ \notag
& = \exp\left({-\frac{1}{8}\tau\delta n}\right)\exp\left((l-N_2)\tau(\int_{\Omega}\log X dP -\frac{3}{8}\delta)\right)
\end{align*}
Notice that $0 < l < N_2$, $\int_{\Omega}\log XdP < 0$ and $\tau= \dfrac{s_0}{N_2}$, we obtain that
\[
(l-N_2)\tau(\int_{\Omega}\log X dP -\frac{3}{8}\delta) \leq -s_0(\int_{\Omega}\log X dP -\frac{3}{8}\delta).
\]

Let $B_2 = \exp \left(-s_0(\int_{\Omega}\log X dP -\frac{3}{8}\delta)\right)$ and $\alpha_2 = \dfrac{1}{8}\tau\delta$. So for any $\delta >0$ and all $n \geq N_2$, we have
\[
\uppercase\expandafter{\romannumeral2} \leq B_2\exp(-\alpha_2 n).
\]

In conclusion, let $N = \max\{N_0, N_1, N_2\}$, $B = B_1 + B_2$ and $\alpha = \min\{\alpha_1, \alpha_2\}$, then for any $\delta > 0$ and all $n \geq N$, we have
\[
P\left(\left|\dfrac{1}{n}\log Q_n + \int_{\Omega}\log X(\omega)dP\right| \geq \delta\right) \leq B\exp(-\alpha n).
\]
\end{proof}

{\bf Acknowledgement}
Thank the referee for the helpful suggestion.
The work was supported by NSFC 11371148, 11201155, Guangdong Natural Science Foundation 2014A030313230 and \lq\lq the Science and Technology Development Fund of Macau (No. 069/2011/A)\rq\rq. N.-R. Shieh did this work while he visited SCUT in October 2013 and May 2014; the hospitality
is acknowledged. We thank Prof. Rui Kuang for his useful comments and discussions.

\end{document}